\DeclareMathOperator*{\argmin}{argmin}
\newcommand{\propref}{Proposition~\ref}
\newcommand{\thmref}{Theorem~\ref}
\newcommand{\lemref}{Lemma~\ref}
\newcommand{\reref}{Remark~\ref}
\newtheorem{remark}{Remark}
\newtheorem{prop}{Proposition}
\newtheorem{thm}{Theorem}
\newtheorem{lemma}{Lemma}
\newtheorem{definition}{Definition}
\numberwithin{remark}{section}
\numberwithin{prop}{section}
\numberwithin{thm}{section}
\numberwithin{lemma}{section}
\numberwithin{definition}{section}
\numberwithin{equation}{section}
\title[]{Embedding of Walsh Brownian Motion}
\author[]{Erhan Bayraktar} \thanks{This research was supported in part by the National Science Foundation under grants DMS-1613170.}  
\address{Department of Mathematics, University of Michigan}
\email{erhan@umich.edu}
\author[]{Xin Zhang} 
\address{Department of Mathematics, University of Michigan}
\email{zxmars@umich.edu}
\date{\today}
\keywords{Skorokhod embedding problem, Walsh Brownian motion, Excursion theory, Vallois' embedding}
\begin{document}
\maketitle

\begin{abstract}
Let $(Z,\kappa)$ be a Walsh Brownian motion with spinning measure $\kappa$. Suppose $\mu$ is a probability measure on $\mathbb{R}^n$. We characterize all the $\kappa$ such that $\mu$ is a stopping distribution of $(Z,\kappa)$. If we further restrict the solution to be integrable, we show that there would be only one choice of $\kappa$. We also generalize Vallois' embedding, and prove that it minimizes the expectation $\mathbb{E}[\Psi(L^Z_{\tau})]$ among all the admissible solutions $\tau$, where $\Psi$ is a strictly convex function and $(L_t^Z)_{t \geq 0}$ is the local time of the Walsh Brownian motion at the origin. 
\end{abstract}

\section{Introduction}

The Skorokhod embedding problem was formulated and solved by Skorokhod in 1961 \cite{MR0185620}. For a centered target distribution $\mu$ with finite second moment, one looks for a stopping time $\tau$ such that $B_{\tau} \sim \mu$, where $(B_t)_{t \geq 0}$ is the standard Brownian motion. In over fifty years, various solutions have been proposed, and some of them have been shown to have particular optimality properties. There is a large number of literature on this problem, but we will only mention a few of them that are related to our own work. For more detailed information, we will refer the reader to \cite{MR2068476} for a nice survey, to \cite{MR3639595} for the Skorokhod embedding's connection with optimal transport, and to \cite{MR2762363} for its application to Mathematical finance. 

Although the embedding problem for one-dimensional Brownian motion has been well studied, there are not many results in higher  dimensions (see e.g. \cite{MR580142}, \cite{RePEc:arx:papers:1711.02784}). As stated in \cite[Section 3.10]{MR2068476}, if we consider measures concentrated on the unit circle, only the uniform distribution can be embedded by means of an integrable stopping time. The main challenge is that the multi-dimensional Brownian motion does not visit points anymore. This motivates us to consider the embedding of Walsh Brownian motion, and it turns out that any $\mu \in \mathcal{P}(\mathbb{R}^n)$ can be embedded using this alternative.

Walsh Brownian motion is a singular diffusion in $\mathbb{R}^n$, which behaves like a one dimensional Brownian motion on each ray away from $\mathbf{0}$. Once it hits the origin, it is kicked away from $\mathbf{0}$ like a reflecting Brownian motion, and is assigned a random direction according to some given distribution $\kappa \in \mathcal{P}(\mathcal{S}^{n-1})$ (see e.g. \cite{MR1022917}, \cite{MR3188354}, \cite{AST_1978__52-53__37_0}). Here $\mathcal{S}^{n-1}$ denotes the unit sphere of $\mathbb{R}^n$. We will denote a Walsh Brownian motion by $(Z_t)_{t \geq 0}=(\Gamma_t, R_t)_{t \geq 0}$, where $\Gamma_t \in \mathcal{S}^{n-1}$ represents the direction and $R_t \in [0,+\infty)$  the radius. For a given probability measure $\mu \in \mathcal{P}(\mathbb{R}^n)$, we want to investigate if there is a stopping time $\tau < +\infty$ and a spinning measure $\kappa \in \mathcal{P}(\mathcal{S}^{n-1})$ such that  $Z_{\tau} \sim \mu$. We will assume that $\mu( \{ \mathbf{0}\}) <1$, without loss of generality, otherwise, we have a trivial solution $\tau=0$. 

Define a map $\Phi$ from punctured Euclidean space $\mathbb{R}^n \setminus \{\bf{0}\}$ to $\mathcal{S}^{n-1} \times \mathbb{R}_+$ as the following, $$\Phi: z \mapsto (\gamma, r),$$ where $z= \gamma r$. 
Denote $\tilde{\mu}=\mu|_{\mathbb{R}^n \setminus \{\bf{0}\}} \circ \Phi^{-1} $, the pushforward measure of $\mu|_{\mathbb{R}^n \setminus \{\bf{0}\}}$. We extend $\tilde{\mu}$ to a probability measure on $\mathcal{S}^{n-1} \times [0, +\infty)$ by distributing the mass $\mu(\bf{0})$ to $\mathcal{S}^{n-1} \times \{0\}$ in proportion to $\gamma \mapsto \tilde{\mu}(\{\gamma\} \times \mathbb{R}_+)$ . Take $k=1-\tilde{\mu}(\mathcal{S}^{n-1} \times \mathbb{R}_+)=\mu( \{\bf{0}\})$ and assign mass $\frac{k}{1-k}\tilde{\mu}(A \times \mathbb{R}_+)$ to $A \times \{0\}$ for any Borel subset $A \subset \mathcal{S}^{n-1}$. Denote the first marginal of $\tilde{\mu}$ by $\tilde{\mu}_1$, the disintegration of $\tilde{\mu}$ with respect to $\tilde{\mu}_1$ by $(\tilde{\mu}_{\gamma})_{\gamma \in \mathcal{S}^{n-1}}$, and the barycenter on each ray by $m_{\gamma}:=\int_{r \geq 0} r  \ \tilde{\mu}_{\gamma}(d r)$. 

Our first result, Theorem~\ref{thm:first-main}, shows that $\mu$ is a stopping distribution if and only if  $\kappa \gg \tilde{\mu}_1$. It is proved by an application of potential theoretic results in \cite{MR0346920}. In particular, the proof is done by checking assumption \eqref{eq-1} which in general is quite difficult to verify. We verify this assumption by characterizing the $\alpha$-excessive functions of Walsh Brownian motion (see \propref{prop-5}). 

Subsequently, we prove that for $\mu$ to be embedded by means of integrable stopping times, it must have a finite second moment. Furthermore, the choice of spinning measure $\kappa$ turns out to be unique, i.e.,
\begin{equation*}\label{eq1}
\kappa( d \gamma)=\frac{m_{\gamma}}{m} \tilde{\mu}_1(d \gamma), \tag{$*$}
\end{equation*}
where $m=\int_{\mathbb{R}^n} \abs{z} \ \mu( d z)$. See Proposition~\ref{prop1} and Theorem~\ref{thm1}. Here we employ stochastic calculus on Walsh Brownian motion (established in \cite{MR3795064}, \cite{KARATZAS20191921}).

Next we consider a Vallois type embedding: Let $\Psi$ be a strictly convex function, and choose a spinning measure $\kappa$ as shown in \eqref{eq1}. Denote by $\mathcal{T}$ the collection of stopping times $\tau$ such that $(Z_{t \wedge \tau})_{t \geq 0}$ is uniform integrable and $Z_{\tau}$ is of distribution $\mu$. Using excursion theory, we also solve the optimization problem 
\begin{equation*}\label{eq5}
\inf\limits_{\tau \in \mathcal{T}} \mathbb{E}[\Psi(L^{Z}_{\tau})], \tag{$\star$}
\end{equation*}
where $(L_t^Z)_{t \geq 0}$ is the local time at the origin.

The rest of the paper is organized as follows. In Section 2, we briefly review the definition of Walsh Brownian motion and its stochastic calculus, and characterize its $\alpha$-excessive functions. In Section 3, we show the existence of almost-surely finite solutions. In Section 4, we provide a sufficient and necessary condition for the existence of integrable solutions. Finally in Section 5, we solve the optimization problem \eqref{eq5}.

\section{Walsh Brownian Motion and its $\alpha$-excessive functions}

For any function $f$ defined on a subset of $\mathbb{R}^n$, we use $f_{\gamma}(r)$ to represent $f(\gamma,r)$. The right-, left-hand  derivative with respect to the variable $r$ are written as $\partial_+f_{\gamma}(r)$, $\partial_-f_{\gamma}(r)$, respectively. In addition,  denote the origin by $\mathbf{0}$.

\subsection{Walsh Brownian motion}

\begin{definition}\label{def1}
Let $(R_t)_{t \geq 0}$ be a reflecting Brownian motion and $\tau_0:=\inf \{t \geq 0: \ R_t =0 \}$. A process $(Z_t)_{t \geq 0}$ is an $n$-dimensional Walsh Brownian motion with spinning measure $\kappa \in \mathcal{P}(\mathcal{S}^{n-1})$ if
\begin{enumerate}[(i)]
\item $Z_t=(\Gamma_t, R_t),$ where $\Gamma_t$ is a $\mathcal{S}^{n-1}$-valued process with the convention that $\Gamma_t=(1, 0, \dots, 0)$ when $R_t=0$;
\item If $Z_0=\bf{0}$, then for $t>0$, the random variable $\Gamma_t$ has distribution $\kappa$ independent of $R_t$;
\item If $Z_t=(\gamma,r)$ with $r>0$, then $\Gamma_t =\gamma$ on the set $\{ t < \tau_0\}$, and on the set $\{ t > \tau_0 \}$, $\Gamma_t$ has distribution $\kappa$ independent of $R_t$.
\end{enumerate}
\end{definition}

In \cite{MR1022917}, Barlow et al. proved the existence of Walsh Brownian motion, which will be briefly reviewed below. Denote the state space of $(Z_t)_{t \geq 0}$ by $$E:=\{(\gamma,r): \gamma \in \text{supp}(\kappa), \ r \geq 0 \}.$$    For $f \in \mathcal{C}(E)$, define
\begin{equation*}
\begin{aligned}
&\bar{f} (r)=\int_{ \gamma \in \mathcal{S}^{n-1}} f(\gamma,r) \ \kappa (d \gamma ), \ \ \  \text{for } r \geq 0. \\
\end{aligned} 
\end{equation*}
Let $(P^+_t)_{t \geq 0}$ be the semigroup of a reflecting Brownian motion on $[0, +\infty)$ and $(P^0_t)_{t \geq 0}$ be the semigroup of a Brownian motion on $[0,+\infty)$ killed at $0$.  By reflection principle, for any $g \in \mathcal{C}_0(\mathbb{R}_+)$ we have, $$\mathbb{E}_r[\mathbbm{1}_{\{\tau_0  \leq t\}}g(R_t)]=P^+_tg(r)-P^0_tg(r).$$
So intuitively for any $f \in \mathcal{C}_0(E)$ the following equations hold, 
\begin{equation*}
\begin{aligned}
\mathbb{E}_{(\gamma,r)}[\mathbbm{1}_{\{\tau_0 \leq t\}}f(Z_t)] & =P^+_t \bar{f}(r)-P^0_t \bar{f}(r), \\
\mathbb{E}_{(\gamma,r)}[\mathbbm{1}_{\{\tau_0 > t\}}f(Z_t)] &
=P^0_t f_{\gamma}(r),  \\
\end{aligned}
\end{equation*}
by which we could formally write down $P_t: \mathcal{C}_0(E) \to \mathcal{C}_0(E), t \geq 0$: 
\begin{equation*}
\begin{aligned}
P_t f (\gamma, 0) &=P^+_t \bar{f} (0), & \gamma \in \mathcal{S}^{n-1},             \\
P_t f(\gamma, r) &=P^+_t \bar{f}(r) + P^0_t (f _{\gamma}-\bar{f})(r),    & r>0. \\
\end{aligned}
\end{equation*}

It can be easily verified that $(P_t)_{t \geq 0}$ is a Feller semigroup on $\mathcal{C}_0(E)$ (see \cite[Theorem 2.1]{MR1022917}). According to \cite[III.7]{MR1331599}, there exits a strong Markov, $E$-valued c\`{a}dl\`{a}g process $(Z_t)_{t \geq 0}$ with transition function $(P_t)_{t \geq 0}$. Write $Z_t=(\Gamma_t, R_t)$ in polar coordinates, and set $\Gamma_t=(1, 0, \dots, 0)$ when $R_t=0$. Then the radial process $(R_t)_{t \geq 0}$ is a reflecting Brownian motion, and $\Gamma_t$ is constant on the set $\{ t < \tau_0\}$. It also can be shown that $(Z_t)_{t \geq 0}$ is continuous and a Feller diffusion on $E$ (see \cite[Theorem 2.4, Corollary 2.5]{MR1022917}). 

It remains to prove that $\Gamma_t$ has the distribution $\kappa$ independent of $R_t$, which is a result from excursion theory of It\^{o} \cite{MR0402949}. 
We refer to \cite[Section 2]{MR3188354} for a detailed proof, and here we only introduce some basic results of the excursion theory that we will use later.

The excursion space for $(Z_t)_{t \geq 0}$ is given by $$\mathcal{U}_Z=\mathcal{S}^{n-1} \times \mathcal{U}_R,$$
where $\mathcal{U}_R=\{e \in \mathcal{C}([0, +\infty)):  e^{-1}(0,+\infty)=(0,\xi), \text{ for some } \xi >0 \}$ is the excursion space of reflecting Brownian motion. We can associate $(Z_t)_{t \geq 0}$ with a Poisson point process on $ \mathcal{U}_Z$. To see this, let $(L_t^Z)_{t \geq 0}$ denote the local time of $(Z_t)_{t \geq 0}$ at the origin. 
It characterizes the amount of time spent by $(Z_t)_{t \geq 0}$ at $\mathbf{0}$ and is just local time of $(R_t)_{t \geq 0}$ at $0$.  Take $(I_l)_{ l \geq 0}$ to  be the right continuous inverse of $(L_t^Z)_{t \geq 0}$. We ``label'' excursions using the local time at $\mathbf{0}$. 

\begin{definition}
The excursion point process is the process $(e_l)_{l \geq 0}$, defined with values in  $\mathcal{U}_Z$ by 
\begin{enumerate}[(i)]
\item if $I_l-I_{l-}>0$, then $e_l$ is the map $$t \mapsto \mathbbm{1}_{\{t \leq I_l-I_{l-}\}} Z_{I_{l-}+t};$$
\item if $I_l-I_{l-}=0$, then $e_l$ is the identically zero function.
\end{enumerate}
\end{definition}
It is shown by It\^{o} that this excursion point process is a Poisson point process, with intensity function given by a unique $\sigma$-finite measure $\eta$ on the excursion space $\mathcal{U}_Z$ (refer to \cite[Chapter XII]{MR1725357}, \cite[Chapter VI, Section 8]{MR1780932} for details). For any $\mathcal{U} \subset \mathcal{U}_Z$ and $l > 0$, we set $\mathcal{U}_l:=(0, l) \times \mathcal{U}$, and $$N^{\mathcal{U}_l}=\sum_{0<s<l} \mathbbm{1}_{\mathcal{U}}(e_s),$$
which is the number of excursions in $\mathcal{U}$ before local time $l$. It can be shown that $N^{\mathcal{U}_l}$ is a Poisson random variable with parameter $l \eta(\mathcal{U})$. According to our construction, the measure $\eta$ is the product $\kappa \times n$, where $n$ is the excursion measure for reflecting Brownian motion. We recall one important property of the measure $n$, which will be used later on in Section 5 (see e.g. \cite[Chapter XII, Exercise 2.10]{MR1725357}). 

\begin{lemma}\label{lem-2}
For every $x>0$, we have 
$$n(\{e \in \mathcal{U}_R: \sup\limits_{t \geq 0} e(t) \geq x\})=\frac{1}{x}.$$
\end{lemma}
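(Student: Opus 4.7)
The plan is to identify $n(\{\sup e \geq x\})$ as the rate of an exponential random variable that can be computed explicitly via Tanaka's formula. Write $T_x:=\inf\{t\geq 0: R_t=x\}$ for the hitting time of level $x$ by the reflecting Brownian motion starting at $0$, and let $(L_t)_{t\geq 0}$ be its local time at $0$.

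First, I would exploit the Poisson structure of the excursion point process. Let $\mathcal{U}^{(x)}:=\{e\in\mathcal{U}_R:\sup_t e(t)\geq x\}$. By the general theory recalled just before the lemma, $l\mapsto N^{\mathcal{U}^{(x)}_l}$ is a Poisson process in $l$ with rate $n(\mathcal{U}^{(x)})$. The key observation is that $L_{T_x}$ coincides with the local time at which the \emph{first} excursion reaching height $x$ is initiated, because $R$ only attains the level $x$ during such an excursion. Consequently $L_{T_x}$ is the first jump time of a rate-$n(\mathcal{U}^{(x)})$ Poisson process, hence exponentially distributed, and
\begin{equation*}
\mathbb{E}[L_{T_x}]=\frac{1}{n(\mathcal{U}^{(x)})}.
\end{equation*}

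Second, I would compute $\mathbb{E}[L_{T_x}]$ directly via the Skorokhod/Tanaka decomposition $R_t=\beta_t+L_t$, where $\beta$ is a standard Brownian motion. Applying optional stopping at the bounded stopping times $T_x\wedge t$ gives $\mathbb{E}[R_{T_x\wedge t}]=\mathbb{E}[L_{T_x\wedge t}]$. Since $0\leq R_{T_x\wedge t}\leq x$ and $R_{T_x\wedge t}\to x$ almost surely (as $T_x<\infty$ a.s. for reflecting Brownian motion), bounded convergence yields $\mathbb{E}[R_{T_x\wedge t}]\to x$; on the other side, $L_{T_x\wedge t}\uparrow L_{T_x}$, so monotone convergence gives $\mathbb{E}[L_{T_x\wedge t}]\to\mathbb{E}[L_{T_x}]$. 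Therefore $\mathbb{E}[L_{T_x}]=x$, and combining with the previous display yields $n(\mathcal{U}^{(x)})=1/x$.

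The main subtlety is the identification of $L_{T_x}$ with the first jump time of the Poisson process counting excursions of height at least $x$; this requires that $R$ does not reach $x$ during any excursion of height strictly less than $x$ (immediate from the definition of $\sup e$) and that $T_x$ is exactly the start time $I_{L_{T_x}-}$ of the first qualifying excursion, which follows from the right-continuity of $l\mapsto I_l$ and the fact that the set of local-time levels at which excursions occur is dense. Everything else is standard optional stopping.
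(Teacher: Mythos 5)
The paper does not prove this lemma; it only cites it (Revuz--Yor, Chapter XII, Exercise 2.10), so there is no in-text argument to compare against. Your derivation is the standard one and the substance is correct: the local time $L_{T_x}$ at which the first excursion reaching level $x$ occurs is, on the one hand, the first jump time of a Poisson process of rate $n(\mathcal{U}^{(x)})$, hence exponential with mean $1/n(\mathcal{U}^{(x)})$; on the other hand, the Skorokhod decomposition $R_t = \beta_t + L_t$ (which is exactly the paper's normalization $B^Z_t = R_t - R_0 - L^Z_t$) together with optional stopping at $T_x \wedge t$ and the passage $t\to\infty$ gives $\mathbb{E}[L_{T_x}] = x$. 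Equating the two yields $n(\mathcal{U}^{(x)})=1/x$, and the distinction between $\{\sup e \geq x\}$ and $\{\sup e > x\}$ is immaterial since $\sup e$ has no atoms under $n$.

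One inaccuracy in your closing paragraph: it is not true that $T_x = I_{L_{T_x}-}$. The excursion starts at height $0$ at time $I_{L_{T_x}-}$ and only later reaches level $x$, so in fact $I_{L_{T_x}-} < T_x < I_{L_{T_x}}$; the appeal to right-continuity of $l\mapsto I_l$ does not establish what you claim and is not needed. The fact you actually use (and stated correctly one paragraph earlier) is simply that $L$ is constant on each excursion interval, so $L_{T_x}$ equals the local-time label of the first excursion with supremum at least $x$, i.e.\ the first jump of $l\mapsto N^{\mathcal{U}^{(x)}_l}$. With that misstatement removed, the proof is clean.
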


%At the end of this subsection, we want to mention another characterization of Walsh Brownian motion. First recall the definition of tree-metric.
%\begin{definition}
%We define the tree-metric on the plane as follows:
%\begin{equation*}
%\begin{aligned}
%\rho(z_1,z_2):=(r_1+r_2)\mathbbm{1}_{\{\gamma_1 \not = \gamma_2 \}}+ |r_1-r_2| \mathbbm{1}_{\{\theta_1 =\theta_2\}},    \ \ z_1, z_2 \in E,
%\end{aligned}
%\end{equation*}
%where $(\gamma_1,r_1), (\gamma_2,r_2)$ are the expressions in polar coordinated of $z_1, z_2$, respectively.
%\end{definition}
%It can be seen that Walsh Brownian motion is continuous with respect to tree-metric, and the topology induced by tree-metric is finer than the one induced by Euclidean metric. 

\subsection{Stochastic calculus}
First we state the change of variable formula for Walsh Brownian motion (see \cite{MR3168934}, \cite{MR3795064},             \cite{KARATZAS20191921}), and then prove a simple lemma which will be used many times in the rest of the paper.

\begin{definition}
Let $\mathfrak{D}$ be the class of Borel-measurable functions $g: \mathbb{R}^n \to \mathbb{R}$, such that 
\begin{enumerate}[(i)]
\item For every $\gamma \in \mathcal{S}^{n-1}$, the function $r \mapsto g_{\gamma}(r)$ is differentiable  on $[0, +\infty)$, and the derivative $r \mapsto g_{\gamma}^{'}(r)$ is absolutely continuous on $[0,+\infty)$;
\item The function $\gamma \mapsto  g^{'}_{\gamma}(0)$ is  bounded.
\item There exist a real number $\xi>0$ and a Lebesgue-integrable function $\iota: (0, \xi] \to [0, +\infty)$ such that $|g^{''}_{\gamma}| \leq \iota(r)$ holds for all $\gamma \in \mathcal{S}^{n-1}$ and $r \in (0, \xi]$.
\end{enumerate}
\end{definition}

Now define a process $$B_t^Z=R_t-R_0-L_t^Z,$$ which is a Brownian motion according to \cite[Lemma 2]{MR1022917}. We have the following change of variable formula (see \cite[Theorem 2.12]{KARATZAS20191921}).

\begin{lemma}\label{lem10}
Let $(Z_t)_{t \geq 0}$ be a Walsh Brownian motion with spinning measure $\kappa$. Then for any $g \in \mathfrak{D}$, the process $g(Z_t)_{t \geq 0}$ is a continuous semimartingale and satisfies the identity
\begin{equation*}
\begin{aligned}
g(Z_t)  = & g(Z_0)+ \int_0^t \mathbbm{1}_{\{ R_s \not = 0\}} g^{'}_{\Gamma_s}(R_s) \ dB^Z_s \\
&  +\int_0^t \mathbbm{1}_{\{ R_s \not = 0\}} g^{''}_{\Gamma_s}(R_s) \ ds +V_g^Z(t),
\end{aligned}
\end{equation*}
where $V_g^Z(t)=\big(\int_{\gamma \in \mathcal{S}^{n-1}} \partial_+g_{\gamma}(0) \kappa (d \gamma) \big)L_t^Z.$
\end{lemma}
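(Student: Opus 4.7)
The plan is to reduce the identity to the classical one-dimensional Itô/Tanaka formula by exploiting the fact, built into \defref{def1}, that the angular component $\Gamma$ is constant on every excursion interval of $R$ away from $\mathbf{0}$. Since $g \in \mathfrak{D}$ guarantees that each ray-restriction $r \mapsto g_\gamma(r)$ lies in the usual Itô domain on $(0,+\infty)$, one can hope to combine ray-by-ray expansions and then account for the contribution at the origin by means of the local time $L^Z$.

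First I would reduce to $g(\mathbf{0}) = 0$ by subtracting a constant, which leaves each term unchanged. Next, fixing a level $\varepsilon > 0$, I would decompose $\{0 \leq s \leq t : R_s > \varepsilon\}$ into its countably many connected components; on each such component $R$ is driven purely by $B^Z$ (since $L^Z$ is flat off $\{R = 0\}$) and $\Gamma_s$ stays equal to some fixed $\gamma \in \text{supp}(\kappa)$. Applying the one-dimensional Itô formula to $g_\gamma(R_s)$ on each component and summing then yields the $\mathbbm{1}_{\{R_s > \varepsilon\}}$-truncated version of the claimed identity, up to boundary contributions at the level $\varepsilon$. Those boundary terms can be repackaged, via Tanaka's formula applied to the reflecting Brownian motion $R$, as an integral against the local time of $R$ at $\varepsilon$, which converges to $L^Z$ as $\varepsilon \downarrow 0$.

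The main obstacle is passing to the limit $\varepsilon \downarrow 0$ and identifying the coefficient of $L^Z_t$; this is exactly where the three conditions defining $\mathfrak{D}$ enter essentially. Each downcrossing of $\varepsilon$ contributes a boundary term of order $g'_\gamma(\varepsilon)$, which tends to $\partial_+ g_\gamma(0)$ as $\varepsilon \downarrow 0$; boundedness of this quantity in $\gamma$, guaranteed by (ii), justifies exchanging the limit with the sum over excursions. Because the excursion point process of $(Z_t)$ has intensity $\kappa \otimes n$, averaging $\partial_+ g_\gamma(0)$ over excursions yields precisely $\bigl(\int_{\mathcal{S}^{n-1}} \partial_+ g_\gamma(0) \, \kappa(d\gamma)\bigr) L^Z_t$, i.e.\ $V^Z_g(t)$. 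Condition (iii), the domination $|g''_\gamma| \leq \iota(r)$ by a Lebesgue-integrable $\iota$ near $0$, provides the dominated convergence required for the $ds$-integral; without it the second-derivative term could blow up at the origin. Condition (i) delivers the measurability and integrability needed to extend the stochastic integral through the limit, completing the passage from the $\varepsilon$-truncated identity to the stated one.
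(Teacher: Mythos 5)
The paper does not prove \lemref{lem10}: it is quoted verbatim from the reference \cite{KARATZAS20191921} (their Theorem~2.12), so there is nothing in the paper to compare against, and any from-scratch argument you give is necessarily a different route. Your overall strategy --- truncate at radius $\varepsilon$, run one-dimensional It\^o ray-by-ray above level $\varepsilon$, and let $\varepsilon\downarrow 0$ --- is the natural one for Freidlin--Sheu-type formulas and is sound in spirit, and your final paragraph identifies the right coefficient via the product intensity $\kappa\otimes n$. But the mechanics of the middle step, where the local time is supposed to appear, do not work as you have set them up.

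The gap is in the claim that summing the one-dimensional It\^o formula over the connected components of $\{s\le t: R_s>\varepsilon\}$ produces ``the $\mathbbm{1}_{\{R_s>\varepsilon\}}$-truncated version of the claimed identity, up to boundary contributions at the level $\varepsilon$.'' The components of this open set cannot be enumerated by stopping times (after every return to level $\varepsilon$ there are infinitely many tiny components accumulating), and for an interior component $(u_i,v_i)$ both endpoints satisfy $R_{u_i}=R_{v_i}=\varepsilon$, so the pathwise telescoping $\sum_i\bigl[g_{\gamma_i}(R_{v_i})-g_{\gamma_i}(R_{u_i})\bigr]$ is identically zero except for the two boundary intervals touching $s=0$ and $s=t$; it converges to $g(Z_t)-g(Z_0)$ with \emph{no} $V^Z_g(t)$ term. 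The one-dimensional analogue makes the difficulty visible: over components of $\{s\le t: B_s>0\}$ one has $\sum_i(B_{v_i}-B_{u_i})=B_t^+$, whereas Tanaka gives $\int_0^t\mathbbm{1}_{\{B_s>0\}}\,dB_s=B_t^+-\tfrac12 L^B_t(0)$. The sum of the component-wise pathwise It\^o formulas and the $\mathbbm{1}_{\{R>\varepsilon\}}$-truncated stochastic integral are \emph{not} the same object; the local time is exactly their difference, and it does not live in any endpoint ``boundary term.'' Your appeal to Tanaka is the right instinct, but it has to replace the component-wise summation rather than patch it, and then one still needs to explain how the semimartingale local time of the scalar process $R$ at level $\varepsilon$ acquires the $\gamma$-dependent weight $\partial_+g_\gamma(0)$ in the limit --- that weighting does not fall out of Tanaka applied to $R$ alone.

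What does make your third paragraph precise is to replace components of $\{R>\varepsilon\}$ by the finitely many excursions of $R$ away from $\mathbf 0$ that reach level $\varepsilon$. These are delimited by genuine stopping times $\sigma_j<\tau_j<\sigma_{j+1}$ (hit $\mathbf 0$, then first reach $\varepsilon$, then next hit $\mathbf 0$); on $[\tau_j,\sigma_{j+1}]$ one has a fixed direction $\gamma_j$ and plain one-dimensional It\^o applies; and the change over the in-between interval $[\sigma_j,\tau_j]$ is $g_{\gamma_j}(\varepsilon)-g(\mathbf 0)=\varepsilon\,\partial_+g_{\gamma_j}(0)+o(\varepsilon)$, with the error uniform in $\gamma$ precisely because of conditions~(ii) and~(iii) of $\mathfrak{D}$ together. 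Since by \lemref{lem-2} the number of such excursions before local time $l$ is Poisson of mean $l/\varepsilon$ with directions i.i.d.\ $\kappa$, the compensator computation gives $V^Z_g(t)$ in the limit; one must also show that the stochastic and $ds$ integrals accumulated over $\bigcup_j[\sigma_j,\tau_j]$ vanish as $\varepsilon\downarrow 0$, which is where (iii) really earns its keep. With the decomposition rewired this way your outline becomes a genuine proof strategy; as written, the component decomposition does not deliver the local time term. (Separately, note that the statement as reproduced in the paper is missing a factor $\tfrac12$ in front of $g''$; this is a typographical slip relative to \cite{KARATZAS20191921}, and the paper's own application in \propref{prop100}, where $r^2-t$ is a martingale, confirms the $\tfrac12$ should be there.)
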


\begin{prop}\label{prop100}
Suppose $\rho: \mathcal{S}^{n-1} \to (0,+\infty)$ is bounded, and define the hitting time of $\rho$, $$\tau(\rho)=\inf \{ t \geq 0: Z_t=(\gamma, \rho(\gamma)), \ \forall \gamma \in \mathcal{S}^{n-1} \}.$$
Then we have 
$$\mathbb{P}^{\mathbf{0}}[\Gamma_{\tau(\rho)} \in d \gamma]=\frac{\frac{1}{\rho(\gamma)}}{\int_{\beta \in \mathcal{S}^{n-1}} \frac{1}{\rho(\beta)} \kappa(d \beta)} \kappa (d \gamma),$$
$$\mathbb{E}^{\mathbf{0}}[\tau(\rho)]=\frac{\int_{\gamma \in \mathcal{S}^{n-1}} \rho(\gamma) \kappa(d \gamma)            }{    \int_{\gamma \in \mathcal{S}^{n-1}} \frac{1}{\rho(\gamma)} \kappa(d \gamma)        } .$$
\end{prop}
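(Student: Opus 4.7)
The plan is to apply the change-of-variable formula (\lemref{lem10}) with carefully chosen test functions in $\mathfrak{D}$. The guiding observation is that whenever $g_\gamma(r) = b_\gamma r$ is linear in $r$ on each ray, the $ds$-integral in \lemref{lem10} vanishes identically, and whenever $\int_{\mathcal{S}^{n-1}} b_\gamma\,\kappa(d\gamma) = 0$, the local-time term $V_g^Z$ also vanishes; the resulting process $g(Z_t)$ is then a genuine local martingale that can be optional-sampled at $\tau(\rho)$.

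For the hitting distribution, fix a Borel set $A \subset \mathcal{S}^{n-1}$ and put
\[
c_A := \int_A \frac{1}{\rho(\beta)}\,\kappa(d\beta), \qquad g_A(\gamma,r) := \left(\frac{\mathbbm{1}_A(\gamma)}{\rho(\gamma)} - c_A\right) r.
\]
Provided $\rho$ is bounded above and bounded away from zero, $g_A \in \mathfrak{D}$; the case where $\inf \rho = 0$ is handled by replacing $\rho$ with $\rho \vee \varepsilon$ and letting $\varepsilon \downarrow 0$. By construction, the coefficient of $L_t^Z$ in $V_{g_A}^Z$ equals $c_A - c_A = 0$, and the radial second derivative of $g_A$ is identically zero, so \lemref{lem10} reduces $g_A(Z_t)$ to a stochastic integral against $B^Z$ with bounded integrand. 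Because $\tau(\rho)$ is dominated by the first time $R$ reaches $\|\rho\|_\infty$ and therefore has finite expectation, that stochastic integral is a true (square-integrable) martingale on $[0,\tau(\rho)]$, and optional sampling gives $\mathbb{E}^{\mathbf{0}}[g_A(Z_{\tau(\rho)})] = 0$. Substituting $R_{\tau(\rho)} = \rho(\Gamma_{\tau(\rho)})$ rearranges this to
\[
\mathbb{P}^{\mathbf{0}}[\Gamma_{\tau(\rho)} \in A] = c_A\,\mathbb{E}^{\mathbf{0}}[\rho(\Gamma_{\tau(\rho)})].
\]
Specializing to $A = \mathcal{S}^{n-1}$ identifies $\mathbb{E}^{\mathbf{0}}[\rho(\Gamma_{\tau(\rho)})] = \bigl(\int_{\mathcal{S}^{n-1}} 1/\rho\,d\kappa\bigr)^{-1}$, and substituting back yields the first displayed identity.

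For the expected hitting time, recall that $R$ is a reflecting Brownian motion started at $0$, so $R_t^2 - t$ is a local martingale (equivalently, apply \lemref{lem10} to $g(\gamma,r) = r^2$, for which $\partial_+ g_\gamma(0) = 0$). Since $R_{t \wedge \tau(\rho)} \le \|\rho\|_\infty$ and $\mathbb{E}^{\mathbf{0}}[\tau(\rho)] < \infty$, optional sampling gives
\[
\mathbb{E}^{\mathbf{0}}[\tau(\rho)] = \mathbb{E}^{\mathbf{0}}[R_{\tau(\rho)}^2] = \mathbb{E}^{\mathbf{0}}[\rho(\Gamma_{\tau(\rho)})^2],
\]
and integrating against the law of $\Gamma_{\tau(\rho)}$ obtained in the previous step produces the second formula.

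The one genuinely nontrivial step is guessing the right family of affine test functions $g_A$ that annihilates the local-time correction; once this is in place, everything reduces to a routine optional-sampling exercise together with the elementary identity $R_{\tau(\rho)} = \rho(\Gamma_{\tau(\rho)})$. The only mild technicality is the lack of a uniform lower bound on $\rho$, which is handled by the truncation argument sketched above and dominated convergence applied to both formulas.
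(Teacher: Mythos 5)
Your proof is correct and follows essentially the same route as the paper: both apply the change-of-variable formula (\lemref{lem10}) with test functions that are linear in $r$ on each ray and chosen so that the local-time drift vanishes, then use optional sampling at the bounded stopping time $\tau(\rho)$, together with the $g(\gamma,r)=r^2$ martingale for the second identity. The only difference is cosmetic: the paper uses $h_{A,B}(\gamma,r)=(\kappa(A)\mathbbm{1}_B(\gamma)-\kappa(B)\mathbbm{1}_A(\gamma))r$ for disjoint $A,B$ and then extracts constancy of $\rho(\gamma)\,\mathbb{P}^{\mathbf{0}}[\Gamma_{\tau(\rho)}\in d\gamma]/\kappa(d\gamma)$, whereas you build the $1/\rho$ weight directly into the test function $g_A$, which bypasses that deduction but requires the short truncation $\rho\vee\varepsilon$ to keep $g_A$ in $\mathfrak{D}$.
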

\begin{proof}
For any disjoint Borel subsets $A, B \subset \mathcal{S}^{n-1}$, define a measurable function on $\mathcal{S}^{n-1} \times \mathbb{R}_+$ as follows, 
$$h_{A,B}(\gamma,r)=(\kappa(A)\mathbbm{1}_B(\gamma)-\kappa(B)\mathbbm{1}_A(\gamma))r.$$
Applying \lemref{lem10}, we see that $(h_{A,B}(Z_t))_{t \geq 0}$ is a martingale. By the optional sampling theorem, we obtain 
\begin{equation*}
\begin{aligned}
0&=\mathbb{E}^{\mathbf{0}}[h_{A,B}(Z_{\tau(\rho)})]   \\
&=\kappa(A) \int_{\gamma \in B} \rho(\gamma) \mathbb{P}^{\mathbf{0}}[\Gamma_{\tau(\rho)} \in d \gamma ]-\kappa(B) \int_{\gamma \in A} \rho(\gamma) \mathbb{P}^{\mathbf{0}}[\Gamma_{\tau(\rho)} \in d \gamma ].
\end{aligned}
\end{equation*}
Since choices of $A$ and $B$ are  arbitrary, it can be see that $\frac{\rho(\gamma) \mathbb{P}^{\mathbf{0}}[\Gamma_{\tau(\rho)} \in d \gamma]}{\kappa( d\gamma)}$ is a constant, from which we can deduce the first part of the lemma. 

Take $g(\gamma,r)=r^2$. Again by \lemref{lem10}, we know that $(g(Z_t)-t)_{t \geq 0}$ is a martingale. By employing the optional sampling theorem, we conclude 
\begin{equation*}
\begin{aligned}
\mathbb{E}^{\mathbf{0}}[\tau(\rho)] &=\mathbb{E}^{\mathbf{0}}[g(Z_{\tau(\rho)})]=\int_{\gamma \in \mathcal{S}^{n-1}} \rho^2(\gamma) \mathbb{P}^{\mathbf{0}}[\Gamma_{\tau(\rho)} \in d \gamma]  \\
&= \frac{\int_{\gamma \in \mathcal{S}^{n-1}} \rho(\gamma) \kappa(d \gamma)            }{    \int_{\gamma \in \mathcal{S}^{n-1}} \frac{1}{\rho(\gamma)} \kappa(d \gamma)        } .\\
\end{aligned}
\end{equation*}
\end{proof}

\subsection{$\alpha$-excessive functions}
In this subsection, we characterize bounded $\alpha$-excessive functions of Walsh Brownian motion. 

\begin{definition}
Let $\alpha \geq 0$. A non-negative nearly Borel measurable function $g$ is called $\alpha$-excessive relative to $(Z_t)_{t \geq 0}$ if 
\begin{enumerate}[(i)]
\item  $g \geq e^{-\alpha t} P_t g$ for every $t \geq 0$; 
\item $e^{-\alpha t}P_t g \to g$ pointwise as $t \to 0$.
\end{enumerate}
\end{definition}

The characterization of $\alpha$-excessive functions for Brownian motion is well known. The following proposition can be deduced easily from \cite[Chapter II, 30]{MR1912205}, after which we present the result for Walsh Brownian motion.

\begin{lemma}\label{lem4}
Let $g: \mathbb{R} \to [0,+\infty)$ be an $\alpha$-excessive function of Brownian motion. Then there exists a non-negative concave function $W$ such that $g(x)=e^{-\sqrt{2 \alpha} x}W(e^{2 \sqrt{2 \alpha} x}).$
\end{lemma}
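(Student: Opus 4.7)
The plan is to reduce the claim to the classical characterization of $0$-excessive functions for a regular one-dimensional diffusion in natural scale, namely that they are precisely the non-negative concave functions of the scale function. The bridge from the $\alpha$-excessive setting to the $0$-excessive setting is Doob's $h$-transform applied to a carefully chosen $\alpha$-harmonic function $h$.

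Setting $\beta := \sqrt{2\alpha}$, the first step is to observe that $h(x) := e^{-\beta x}$ solves $\tfrac{1}{2}h'' = \alpha h$, so by It\^o's formula the process $(e^{-\alpha t} h(B_t))_{t \geq 0}$ is a (positive) local martingale, i.e.\ $h$ is $\alpha$-harmonic for Brownian motion. Since $h>0$ on $\mathbb{R}$, one can form the $h$-conditioned process $X^h$ with generator $L^h f = h^{-1}(L-\alpha)(hf)$. A direct computation using $\tfrac{1}{2}h''=\alpha h$ gives
\begin{equation*}
L^h f(x) = \tfrac{1}{2} f''(x) - \beta\, f'(x),
\end{equation*}
so $X^h$ is a Brownian motion with constant drift $-\beta$. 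The standard $h$-transform correspondence (see e.g.\ \cite[Chapter II]{MR1912205}) then asserts that a non-negative function $g$ is $\alpha$-excessive for Brownian motion if and only if $g/h$ is $0$-excessive for $X^h$.

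Next I would pass $X^h$ to natural scale. Solving $\tfrac{1}{2} s'' - \beta s' = 0$ yields, up to affine normalization, the scale function $s(x) = e^{2\beta x}$, which is a diffeomorphism from $\mathbb{R}$ onto $(0,+\infty)$. In the coordinate $y = s(x)$ the process $X^h$ becomes a (continuous) local martingale on $(0,+\infty)$, and for such a diffusion the non-negative $0$-excessive functions are exactly the non-negative concave functions of $y$; this is the one-dimensional fact recalled in the lemma's cited reference. Applying it to $g/h$ produces a non-negative concave function $W$ on $(0,+\infty)$ with
\begin{equation*}
W(e^{2\beta x}) = \frac{g(x)}{h(x)} = e^{\beta x} g(x),
\end{equation*}
which rearranges to $g(x) = e^{-\beta x} W(e^{2\beta x})$, i.e.\ the desired representation.

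The routine calculations (verifying $h$ is $\alpha$-harmonic, computing $L^h$, and solving for $s$) are all one-line. The only step that genuinely requires care is the invocation of the $h$-transform correspondence between $\alpha$-excessive and $0$-excessive functions, together with the classical equivalence between $0$-excessive and concave in the natural-scale coordinate; both are standard and can be quoted from \cite[Chapter II, 30]{MR1912205}, which is precisely what the paper suggests.
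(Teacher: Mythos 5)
Your proof is correct: $h(x)=e^{-\sqrt{2\alpha}x}$ is indeed $\alpha$-harmonic, the $h$-transform carries $\alpha$-excessive functions of Brownian motion to $0$-excessive functions of the drifted Brownian motion $X^h$, the scale function is $s(x)=e^{2\sqrt{2\alpha}x}$, and the concavity characterization of $0$-excessive functions in natural scale finishes the argument. The paper gives no argument of its own — it simply cites \cite[Chapter II, 30]{MR1912205} — and your reconstruction via Doob's $h$-transform is precisely the mechanism underlying that reference's formula (in Borodin--Salminen notation, $\varphi_\alpha(x)=e^{-\sqrt{2\alpha}x}$ and $\psi_\alpha(x)=e^{\sqrt{2\alpha}x}$, and $\alpha$-excessive means $g/\varphi_\alpha$ is concave in $\psi_\alpha/\varphi_\alpha$), so this is essentially the same approach, spelled out.
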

  
 \begin{prop}\label{prop-5}
 Suppose $g$ is a bounded $\alpha$-excessive function of $(Z_t)_{t \geq 0}$, then there exists a family of functions $(W_{\gamma})_{\gamma \in \mathcal{S}^{n-1}}$ such that
\begin{enumerate}[(i)]
\item The function $W_{\gamma}:[1,+\infty) \to [0,+\infty)$ is concave and $W_{\gamma}(1)=g(\mathbf{0})$; 
\item For $\gamma \in \mathcal{S}^{n-1}$, we have $g_{\gamma}(r)=e^{-\sqrt{2 \alpha} r}W_{\gamma}(e^{2 \sqrt{2 \alpha} r})$; 
\item $ \int_{\gamma \in \mathcal{S}^{n-1}} \partial_+W_{\gamma}(1) \ \kappa(d \gamma) \leq \frac{g(\mathbf{0})}{2}. $
\end{enumerate}
 \end{prop}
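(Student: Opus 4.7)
The plan is to reduce items (i)--(ii) to the one-dimensional characterization of \lemref{lem4} by analyzing $g$ along each ray, and then to extract (iii) from the $\alpha$-excessive inequality evaluated at the origin, combined with the hitting-time identities underlying \propref{prop100}.

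Fix $\gamma \in \mathcal{S}^{n-1}$. Starting from $(\gamma, r)$ with $r > 0$, the Walsh process coincides with $(\gamma, R_t)$ on $\{t < \tau_0\}$, where $R_t$ is a standard Brownian motion. For any $0 < r_1 < r < r_2$ the stopping time $\tau := \inf\{t : R_t \in \{r_1, r_2\}\}$ satisfies $\tau < \tau_0$ almost surely, so the $\alpha$-excessive inequality extended to stopping times gives
$$g_\gamma(r) \geq \mathbb{E}_r\bigl[e^{-\alpha \tau} g_\gamma(R_\tau)\bigr].$$
Hence $g_\gamma$ restricted to $(0, +\infty)$ is $\alpha$-excessive for Brownian motion, and \lemref{lem4} provides a non-negative concave $W_\gamma$ on $(1, +\infty)$ with $g_\gamma(r) = e^{-\sqrt{2\alpha}\,r} W_\gamma(e^{2 \sqrt{2\alpha}\,r})$. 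By concavity the right limit $W_\gamma(1^+)$ exists, and one defines $W_\gamma(1) := W_\gamma(1^+)$.

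To identify $W_\gamma(1)$ with $g(\mathbf{0})$, two matching inequalities are needed. Applying the $\alpha$-excessive inequality with $\tau_0$ starting from $(\gamma, r)$ and using $\mathbb{E}_r[e^{-\alpha \tau_0}] = e^{-\sqrt{2\alpha}\,r}$ gives $g_\gamma(r) \geq g(\mathbf{0}) e^{-\sqrt{2\alpha}\,r}$, so $W_\gamma(1) \geq g(\mathbf{0})$. For the matching upper bound, start from $\mathbf{0}$ and take $\tau_r := \inf\{t : R_t = r\}$; excursion theory shows $\Gamma_{\tau_r}$ has law $\kappa$ and is independent of $\tau_r$, while $\mathbb{E}_\mathbf{0}[e^{-\alpha \tau_r}] = 1/\cosh(\sqrt{2\alpha}\,r)$, yielding the key inequality
\begin{equation*}
g(\mathbf{0}) \geq \frac{1}{\cosh(\sqrt{2\alpha}\,r)} \int_{\mathcal{S}^{n-1}} g_\gamma(r) \, \kappa(d\gamma).
\end{equation*}
Letting $r \to 0^+$ and using bounded convergence yields $g(\mathbf{0}) \geq \int W_\gamma(1) \, \kappa(d\gamma)$; combined with $W_\gamma(1) \geq g(\mathbf{0})$, this forces $W_\gamma(1) = g(\mathbf{0})$ for $\kappa$-a.e.\ $\gamma$, and on the exceptional null set one may simply set $W_\gamma \equiv g(\mathbf{0})$.

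For (iii), substituting the representation from (ii) into the displayed inequality above and using $e^{-x}/\cosh(x) = 2/(e^{2x} + 1)$, with $u = e^{2\sqrt{2\alpha}\,r}$ one obtains
$$\int_{\mathcal{S}^{n-1}} W_\gamma(u) \, \kappa(d\gamma) \leq \frac{u+1}{2}\, g(\mathbf{0}) \qquad \text{for all } u \geq 1,$$
which is saturated at $u = 1$. Subtracting the $u = 1$ identity, dividing by $u - 1$, and letting $u \downarrow 1$ --- with monotone convergence justified by the fact that the slopes $(W_\gamma(u) - W_\gamma(1))/(u - 1)$ increase as $u \downarrow 1$ by concavity --- produces (iii). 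The delicate step is the boundary identification $W_\gamma(1) = g(\mathbf{0})$, where the ray-wise lower bound and the averaged upper bound from the excursion decomposition must interlock exactly against the probability measure $\kappa$; once that is established, (iii) drops out by differentiating the same excursion inequality at $r = 0$.
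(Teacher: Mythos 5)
Your argument is correct and reaches the same conclusion as the paper, but by a genuinely different route for item (iii). The paper's proof works with $\tau(\epsilon) := \inf\{t: R_t = \epsilon\}$ and uses only the crude bounds $\mathbb{E}^{\mathbf{0}}[\tau(\epsilon)] = \epsilon^2$ and $\mathbb{P}^{\mathbf{0}}[\Gamma_{\tau(\epsilon)} \in d\gamma] = \kappa(d\gamma)$ from \propref{prop100}; it splits $\mathbb{E}^{\mathbf{0}}[e^{-\alpha\tau(\epsilon)}g(Z_{\tau(\epsilon)})]$ into $\mathbb{E}^{\mathbf{0}}[g(Z_{\tau(\epsilon)})]$ plus an error of order $\alpha\epsilon\lVert g\rVert_\infty$, then divides by $\epsilon$ and passes to the limit via monotone and bounded convergence on a hand-decomposed difference quotient. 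You instead invoke the exact Laplace transform $\mathbb{E}_{\mathbf{0}}[e^{-\alpha\tau_r}] = 1/\cosh(\sqrt{2\alpha}\,r)$ for reflecting Brownian motion together with the independence of $\Gamma_{\tau_r}$ from $\tau_r$, which lets you factor the expectation and obtain the closed-form bound $\int W_\gamma(u)\,\kappa(d\gamma) \leq \tfrac{u+1}{2}\,g(\mathbf{0})$ for every $u \geq 1$. That inequality makes the structure transparent: it is saturated at $u=1$, and differentiating at $u=1$ gives (iii) with a one-line monotone-convergence argument. What you buy is cleanliness; what you pay is reliance on two inputs the paper avoids — the exact hitting-time Laplace transform, and the independence of $\Gamma_{\tau_r}$ and $\tau_r$, which is true but follows from the product structure $\kappa \times n$ of the It\^o excursion measure rather than from \propref{prop100} directly, and deserves a citation or a short marking-theorem argument. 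Your proof is also more careful than the paper's on the identification $W_\gamma(1) = g(\mathbf{0})$, which the paper asserts without comment.

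One small loose end: your two matching inequalities $W_\gamma(1) \geq g(\mathbf{0})$ (rayswise) and $\int W_\gamma(1)\,\kappa(d\gamma) \leq g(\mathbf{0})$ (averaged) give $W_\gamma(1) = g(\mathbf{0})$ only for $\kappa$-almost every $\gamma$. Your patch of redefining $W_\gamma \equiv g(\mathbf{0})$ on the exceptional null set would break (ii) there, since $g_\gamma$ is a fixed function and cannot be redefined for $\gamma$ in the support of $\kappa$. The cleaner remark is that the state space is $E = \operatorname{supp}(\kappa) \times [0,+\infty)$, so for $\gamma \notin \operatorname{supp}(\kappa)$ the map $g_\gamma$ is not constrained by $g$ and may indeed be chosen to satisfy (i)--(ii); and in the one place \propref{prop-5} is used (the proof of \thmref{thm:first-main}), all integrals are against $\kappa$ or against $\tilde{\mu}_1 \ll \kappa$, so the $\kappa$-a.e.\ statement is all that is needed.
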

\begin{proof}
By the strong Markov property for Walsh Brownian motion and the definition of $\alpha$-excessive function, we have,  for all $s \leq t$
$$\mathbb{E}_z[e^{-\alpha t}g(Z_t) | \mathcal{F}_s]=e^{-\alpha s}\mathbb{E}^{Z_s}[e^{-\alpha (t-s)} g(Z_{t-s})]\leq e^{-\alpha s}g(Z_s).$$ So that $(e^{-\alpha t}g(Z_t))_{t \geq 0}$ is a supermartingale, and thus for any stopping time $\tau$, $$ g(z) \geq \mathbb{E}^z[e^{-\alpha \tau} g(Z_{\tau})].$$
By restricting $g$ to a single ray in $E$, we obtain that $g_{\gamma}$ is $\alpha$-excessive for Brownian motion for each $\gamma \in \mathcal{S}^{n-1}$. Using \lemref{lem4}, we have claimed $(i) \& (ii)$ of the theorem.   %Thus there exist $\epsilon_0 >0$ and $K \in \mathbb{R}$ such that for all $\gamma \in \mathcal{S}^{n-1}, \epsilon < \epsilon_0$, $$\frac{g_{\gamma}(\epsilon)-g(\mathbf{0})}{\epsilon} \geq K.$$

Take $\tau(\epsilon):=\inf \{ t \geq 0: R_t=\epsilon \}$.
Employing \propref{prop100}, we obtain that $\mathbb{E}^{\mathbf{0}}[\tau(\epsilon)]=\epsilon^2$ and $\mathbb{P}^{\mathbf{0}}[\Gamma_{\tau(\epsilon)} \in d \gamma]=\kappa(d \gamma)$. %Note that $\tau(\epsilon)$ is decreasing as $\epsilon \to 0$, and thus $0=\lim\limits_{\epsilon \to 0}\mathbb{E}^{\mathbf{0}}[\tau(\epsilon)]=\mathbb{E}^{\mathbf{0}}[\lim\limits_{\epsilon \to 0} \tau(\epsilon)]$ by the monotone convergence theorem. So that $\lim\limits_{\epsilon \to 0} \tau(\epsilon)=0$ almost surely.  
By the optional sampling theorem, we get $g(\mathbf{0}) \geq \mathbb{E}^{\mathbf{0}}[e^{-\alpha \tau(\epsilon)}g(Z_{\tau(\epsilon)})].$ Subtracting both sides by $g(\mathbf{0})$ and dividing the inequality by $\epsilon$, we obtain 
\begin{equation}\label{eq-2}
\begin{aligned}
0 & \geq \mathbb{E}^{\mathbf{0}}\bigg[\frac{g(Z_{\tau(\epsilon)})-g(\mathbf{0})}{\epsilon}\bigg]-\mathbb{E}^{\mathbf{0}}\bigg[\frac{1- e^{-\alpha \tau(\epsilon)}}{\epsilon} g(Z_{\tau(\epsilon)})\bigg] \\
&=\int_{\gamma \in \mathcal{S}^{n-1}} \frac{g(\gamma,\epsilon)-g(\mathbf{0})}{\epsilon} \kappa(d \gamma)- \mathbb{E}^{\mathbf{0}}\bigg[\frac{1- e^{-\alpha \tau(\epsilon)}}{\epsilon} g(Z_{\tau(\epsilon)})\bigg] \\
\end{aligned}
\end{equation}
Since function $g$ is bounded, we can obtain the following inequality,
\begin{equation}\label{equation1}
\begin{aligned}
 \lim\limits_{\epsilon \to 0} \mathbb{E}^{\mathbf{0}}\bigg[& \frac{1- e^{-\alpha \tau(\epsilon)}}{\epsilon}  g(Z_{\tau(\epsilon)})\bigg]  \leq   \lVert g\rVert_{\infty} \lim\limits_{\epsilon \to 0} \mathbb{E}^{\mathbf{0}}\bigg[\frac{1- e^{-\alpha \tau(\epsilon)}}{\epsilon} \bigg]   \\
\leq & \lVert g\rVert_{\infty} \lim\limits_{\epsilon \to 0}\mathbb{E}^{\mathbf{0}}\bigg[ \frac{\alpha \tau(\epsilon)}{\epsilon}\bigg]=\lVert g\rVert_{\infty} \lim\limits_{\epsilon \to 0} \alpha \epsilon =0. 
\end{aligned}
\end{equation}
As to the first term on the right-hand side of \eqref{eq-2}, we rewrite $ \frac{g_{\gamma}(\epsilon)-g(\mathbf{0})}{\epsilon}=\int_0^{\epsilon} \frac{g_{\gamma}^{'}(r)}{\epsilon} \ dr $ as a sum of two integrals 

$$-\frac{\int_0^{\epsilon}  \sqrt{2 \alpha}g_{\gamma}(r) \ dr }{ \epsilon}+\frac{2(e^{\sqrt{2\alpha}\epsilon}-1)}{\epsilon} \frac{\int_1^{e^{\sqrt{2\alpha}\epsilon}}  W^{'}_{\gamma}(x^2)  \ dx }{ e^{\sqrt{2\alpha}\epsilon}-1}.$$
Denote the first term by $-u_{\gamma}(\epsilon)$, and the second term by $\frac{2(e^{\sqrt{2\alpha}\epsilon}-1)}{\epsilon}v_{\gamma}(\epsilon)$. In conjunction with \eqref{equation1}, \eqref{eq-2} becomes
\begin{equation}\label{eq-3}
\begin{aligned}
\int_{\gamma \in \mathcal{S}^{n-1}} & v_{\gamma}(\epsilon)  \kappa(d \gamma) \\
  & \leq \frac{\epsilon}{2(e^{\sqrt{2\alpha}\epsilon}-1)} \bigg( \int_{\gamma \in \mathcal{S}^{n-1}} u_{\gamma}(\epsilon) \kappa(d \gamma)+ \lVert g\rVert_{\infty} \lim\limits_{\epsilon \to 0}\mathbb{E}^{\mathbf{0}}\bigg[ \frac{\alpha \tau(\epsilon)}{\epsilon}\bigg] \bigg). \\
\end{aligned}
\end{equation}
Since $g$ is non-negative, the derivative $W^{'}_{\gamma}(r)$ is non-negative. In addition, function $v_{\gamma}(\epsilon)$ increases as $\epsilon$ decreases to $0$. Therefore, we can apply monotone convergence theorem to the left-hand side of \eqref{eq-3}. Since the boundedness of $g$ implies the boundedness of $u_{\gamma}(\epsilon)$, we can apply bounded convergence theorem to the first integral on the right-hand side of \eqref{eq-3}. Letting $\epsilon$ decrease to $0$ in \eqref{eq-3}, we obtain
$$\int_{\gamma \in \mathcal{S}^{n-1}} \partial_+W_{\gamma}(1) \kappa(d \gamma) \leq  \frac{1}{2}g(\mathbf{0}).$$
\end{proof}

\section{Almost surely finite Solutions}
Suppose $\mu$ is a Borel measure on Euclidean space $\mathbb{R}^n$. We want to characterize all the spinning measures $\kappa \in \mathcal{P}(\mathcal{S}^{n-1})$, such that $\mu$ is a stopping distribution of $(Z, \kappa)$. Here $(Z,\kappa)$ represents the Walsh Brownian motion with spinning measure $\kappa$, and we say $\mu$ is a stopping distribution if and only if there exists a stopping time $\tau< +\infty$ such that $Z_{\tau} \sim \mu$. 

Note that if $\tilde{\mu}_1=\sum_{i=1}^n \delta_{\gamma_i}$ is sum of finitely many atoms, we can choose $\kappa$ to be any measure which charges all directions $\gamma_i$, i.e., $$\kappa( \gamma_i) >0,  i=1,\dotso,n.$$
Since $\kappa( \gamma_i) >0$, the process $(Z_t)_{t \geq 0}$ is recurrent on $\gamma_i$-ray. We can enlarge $\mathcal{F}$ such that  $\mathcal{F}_0$ is rich enough to support an independent variable. Define the stopping time,  
\begin{equation*}
\begin{aligned}
\tau= \inf \{ t >1: \ Z_t=X\},
\end{aligned}
\end{equation*} 
where $X$ is $\mathcal{F}_0$-measurable and of distribution  $\mu$. It is clear that $Z_{\tau} \sim \mu$. 

However, if $\tilde{\mu}_1$ is continuous, it is impossible to choose $\kappa$ such that $(Z_t)_{t \geq 0}$ is recurrent on each ray. Consequently, the above construction no longer works. 
In \cite{MR0346920}, Rost answers a general question about the existence of embedding for an arbitrary Markov process. Suppose $(X_t)_{t \geq 0}$ is a transient Markov process, $U^X$ is its potential operator, and $\nu_0 U^X, \nu_1 U^X$ are $\sigma$-finite. Then, for the initial distribution $X_0 \sim \nu_0$, there exists a stopping time $\tau$ such that $Z_{\tau} \sim \nu_1$ if and only if $\nu_1 U^X \leq \nu_0 U^X$. If $(X_t)_{t \geq 0}$ is not transient, it would be killed at an independent time with exponential distribution (with parameter $\alpha$), which results in $(X^{\alpha}_t)_{t \geq 0}$. Rost proved that $\nu_1$ is a stopping distribution of $(X_t)_{t \geq 0}$ starting with $\nu_0$, if and only if $\lim\limits_{\alpha \to 0}( \nu_1 U^{X^{\alpha}} - \nu_0 U^{X^{\alpha}}) \leq 0$. The function $U^X(f)$ is excessive for any measurable $f$, so the result can be reformulated as the following (see \cite[Theorem 4]{MR0346920}). 

\begin{lemma}\label{lem40}
Suppose $(X_t)_{t \geq 0}$ is a Markov process with such state space that is locally compact and completely separable. A necessary and sufficient condition for $\mu$ to be a stopping distribution of $(X_t)_{t \geq 0}$ starting with $\delta_{\bf{0}}$ is 
\begin{equation}\label{eq-1}
\downarrow \lim_{\alpha \to 0} \sup\limits_{1 \geq g \in \mathfrak{S}^{\alpha}} \langle \mu-\delta_{\bf{0}}, g \rangle=0, 
\end{equation}
where $\mathfrak{S}^{\alpha}$ is the set of $\alpha$-excessive functions of $(X_t)_{t \geq 0}$.
\end{lemma}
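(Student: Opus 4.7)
The plan is to prove the two directions of the equivalence separately. Necessity is elementary and rests on the supermartingale property of $\alpha$-excessive functions evaluated along $X$; sufficiency I would reduce to the classical potential-theoretic formulation of Rost's theorem applied to the $\alpha$-killed process $X^{\alpha}$.

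For necessity, suppose $\tau<+\infty$ almost surely is a stopping time with $X_{0}=\mathbf{0}$ and $X_{\tau}\sim\mu$. Fix $\alpha>0$ and pick any $g\in\mathfrak{S}^{\alpha}$ with $g\leq 1$. The defining inequality $e^{-\alpha t}P_{t}g\leq g$ together with the strong Markov property makes $(e^{-\alpha t}g(X_{t}))_{t\geq 0}$ a bounded nonnegative supermartingale. Applying optional sampling at $\tau\wedge n$ and letting $n\to\infty$ by dominated convergence yields $\mathbb{E}^{\mathbf{0}}[e^{-\alpha\tau}g(X_{\tau})]\leq g(\mathbf{0})$, whence
\[
\langle\mu-\delta_{\mathbf{0}},g\rangle=\mathbb{E}^{\mathbf{0}}\bigl[g(X_{\tau})(1-e^{-\alpha\tau})\bigr]+\bigl(\mathbb{E}^{\mathbf{0}}[e^{-\alpha\tau}g(X_{\tau})]-g(\mathbf{0})\bigr)\leq\mathbb{E}^{\mathbf{0}}\bigl[1-e^{-\alpha\tau}\bigr].
\]
The right-hand side is independent of $g$ and tends to $0$ as $\alpha\downarrow 0$ because $\tau<+\infty$ almost surely, which proves \eqref{eq-1}.

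For sufficiency, I would pass to the $\alpha$-killed process $X^{\alpha}$, whose $0$-resolvent is exactly $U^{\alpha}f(x)=\mathbb{E}^{x}[\int_{0}^{\infty}e^{-\alpha t}f(X_{t})\,dt]$, and whose class of bounded $0$-excessive functions is precisely $\{g\in\mathfrak{S}^{\alpha}:g\text{ bounded}\}$. A standard consequence of the resolvent equation, namely $nU^{\alpha+n}g\uparrow g$, exhibits every $g\in\mathfrak{S}^{\alpha}$ with $g\leq 1$ as the pointwise increasing limit of $\alpha$-potentials $U^{\alpha}f_{n}$ with $f_{n}\geq 0$ and $U^{\alpha}f_{n}\leq 1$. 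Consequently the sup in \eqref{eq-1} can be restricted (in the limit) to $\alpha$-potentials, so \eqref{eq-1} is equivalent to the asymptotic potential-theoretic inequality $\mu U^{\alpha}\leq\delta_{\mathbf{0}}U^{\alpha}$ as $\alpha\downarrow 0$, which is exactly Rost's hypothesis for embedding $\mu$ into the transient process $X^{\alpha}$.

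The main obstacle is then the final step: manufacturing a genuine stopping time $\tau$ for $X$ from this family of inequalities. Rost's construction proceeds via a filling/balayage scheme producing, for each $\alpha>0$, an embedding $\tau_{\alpha}$ for $X^{\alpha}$, and a tightness argument on the Baxter--Chacon space of randomized stopping times then extracts a limit stopping time $\tau$ for $X$ with $X_{\tau}\sim\mu$. This construction is delicate and depends essentially on the local compactness and second countability of the state space; rather than reproduce it, I would invoke \cite[Theorem 4]{MR0346920} directly and only verify the dictionary between Rost's statement in terms of the potentials $\nu U^{\alpha}$ and the $\alpha$-excessive formulation \eqref{eq-1}, which is the routine approximation argument sketched above.
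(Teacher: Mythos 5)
Your proposal is correct and takes essentially the same route as the paper: the paper states this lemma without a displayed proof, treating it as a reformulation of Rost's Theorem 4 in \cite{MR0346920}, and you likewise reduce the substantive (sufficiency) direction to that citation. The only real difference is that you flesh out what the paper leaves in its surrounding prose — a short elementary supermartingale/optional-sampling argument for necessity, and the standard approximation $nU^{\alpha+n}g\uparrow g$ translating between Rost's potential condition $\lim_{\alpha\to 0}(\mu U^{\alpha}-\delta_{\mathbf{0}}U^{\alpha})\leq 0$ and the $\alpha$-excessive formulation \eqref{eq-1} — both of which are correct and consistent with the paper's intent.
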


 If $\tilde{\mu}_1(A)>0$ for some set $A \subset \mathcal{B}(\mathcal{S}^{n-1})$, we must have $\kappa(A)>0$ in order to make $\mu$ a stopping distribution, that is,  $\kappa \gg \tilde{\mu}_1$ is a necessary condition.  We will show that it is also  sufficient  by checking \eqref{eq-1} in \lemref{lem40}.

Define $\tilde{\mu}_{\gamma}^{\alpha}$ to be the pushforward measure of $\tilde{\mu}_{\gamma}$ under the mapping $r \mapsto e^{2 \sqrt{2 \alpha} r}$. We make two observations: (1) As $\alpha \to 0$, the measure $\tilde{\mu}_{\gamma}^{\alpha}$ will concentrate on a neighborhood of $1$; (2) The condition $g \leq 1$ is equivalent to $W_{\gamma}(x) \leq \sqrt{x}$ for any $ \gamma \in \mathcal{S}^{n-1}$.  

\begin{thm}\label{thm:first-main}
If $\tilde{\mu}_1$ is absolutely continuous with respect to $\kappa$, then the equation \eqref{eq-1} holds for Walsh Brownian motion $(Z_t)_{t \geq 0}$.
As a result, $\mu$ is a stopping distribution of $(Z,\kappa)$ if and only if $\kappa \gg \tilde{\mu}_1$.
\end{thm}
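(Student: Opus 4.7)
The necessity of $\kappa \gg \tilde{\mu}_1$ follows directly from \defref{def1}: every excursion of $(Z_t)$ from $\mathbf{0}$ picks a direction from $\kappa$, so whenever $\kappa(A) = 0$ for some Borel $A \subset \mathcal{S}^{n-1}$, the set $\{z \neq \mathbf{0}: z/|z| \in A\}$ is almost surely never visited, forcing $\mu(\{z \neq \mathbf{0}: z/|z| \in A\}) = 0$ whenever $Z_\tau \sim \mu$. By the construction of the extended $\tilde{\mu}$, this in turn yields $\tilde{\mu}_1(A) = 0$.

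For sufficiency I will verify \eqref{eq-1} via \lemref{lem40}. Take any bounded $\alpha$-excessive $g$ with $g \leq 1$. By \propref{prop-5}, $g_\gamma(r) = e^{-\sqrt{2\alpha}\, r} W_\gamma(e^{2\sqrt{2\alpha}\, r})$ with $W_\gamma$ concave and non-negative on $[1,\infty)$, $W_\gamma(1) = g(\mathbf{0})$, and $\int \partial_+ W_\gamma(1)\,\kappa(d\gamma) \leq g(\mathbf{0})/2$. Concavity together with non-negativity on $[1,\infty)$ forces $c_\gamma := \partial_+ W_\gamma(1) \geq 0$: otherwise the slope, being non-increasing, would remain negative and drive $W_\gamma$ to $-\infty$. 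Writing $\langle \mu - \delta_{\mathbf{0}}, g\rangle$ via the disintegration of $\tilde{\mu}$ against $\tilde{\mu}_1$, then applying the concavity bound $W_\gamma(x) \leq W_\gamma(1) + c_\gamma(x-1)$ and dropping the resulting non-positive term $g(\mathbf{0})(e^{-\sqrt{2\alpha}\, r} - 1)$, yields the pointwise estimate
\[
g_\gamma(r) - g(\mathbf{0}) \leq 2 c_\gamma \sinh(\sqrt{2\alpha}\, r),
\]
which, combined with the trivial bound $g_\gamma(r) - g(\mathbf{0}) \leq 1$, gives $g_\gamma(r) - g(\mathbf{0}) \leq \min\bigl(2 c_\gamma \sinh(\sqrt{2\alpha}\, r),\,1\bigr)$.

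Given $\varepsilon > 0$, I will combine a tail truncation in $r$ with a density truncation in $\gamma$. First pick $R$ so that $\mu(\{|z| > R\}) < \varepsilon/3$; the contribution to $\langle \mu - \delta_{\mathbf{0}}, g\rangle$ from $r > R$ is controlled by $\tilde{\mu}(\mathcal{S}^{n-1} \times (R,\infty)) = \mu(\{|z| > R\})$. For $r \leq R$ and $\alpha$ small enough that $\sqrt{2\alpha}\, R \leq 1$, $\sinh$ is comparable to its argument, so the integrand is dominated by $\min(C c_\gamma R\sqrt{\alpha},\,1)$ for an absolute constant $C$. Writing $\phi := d\tilde{\mu}_1/d\kappa$ and splitting $\mathcal{S}^{n-1} = \{\phi \leq N\} \cup \{\phi > N\}$,
\[
\int \min\bigl(C c_\gamma R \sqrt{\alpha},\,1\bigr)\,\tilde{\mu}_1(d\gamma) \;\leq\; C N R \sqrt{\alpha} \int c_\gamma\,\kappa(d\gamma) \,+\, \tilde{\mu}_1(\{\phi > N\}) \;\leq\; \tfrac{C}{2} N R \sqrt{\alpha} \,+\, \tilde{\mu}_1(\{\phi > N\}),
\]
using $\int c_\gamma\,\kappa(d\gamma) \leq 1/2$. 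Choose $N$ so that $\tilde{\mu}_1(\{\phi > N\}) < \varepsilon/3$ (possible since $\tilde{\mu}_1$ is finite), then $\alpha$ small enough that $C N R \sqrt{\alpha}/2 < \varepsilon/3$. This gives $\langle \mu - \delta_{\mathbf{0}}, g\rangle < \varepsilon$ uniformly in $g$, and \eqref{eq-1} follows.

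The main obstacle is the last step, where both the Radon–Nikodym density $\phi$ and the pointwise slopes $c_\gamma$ may be unbounded, yet we only have an integral (rather than pointwise) control on $c_\gamma$. The two-level truncation exploits a minimax trade-off: on $\{\phi > N\}$ the trivial bound $\leq 1$ is essential because the concavity bound could diverge, while on $\{\phi \leq N\}$ the concavity bound provides $O(\sqrt{\alpha})$ decay that is absorbed by the $\kappa$-mean constraint on $c_\gamma$. This is also precisely why both ingredients of \propref{prop-5}---the representation of $g_\gamma$ via a concave function $W_\gamma$ and the integral inequality on $\partial_+ W_\gamma(1)$---are needed.
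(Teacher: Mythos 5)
Your proof is correct and takes a genuinely different route from the paper, although both hinge on the same two ingredients from \propref{prop-5} (the concave representation $g_\gamma(r)=e^{-\sqrt{2\alpha}r}W_\gamma(e^{2\sqrt{2\alpha}r})$ and the integral bound $\int \partial_+W_\gamma(1)\,\kappa(d\gamma)\leq g(\mathbf 0)/2$) and on Rost's criterion. The paper works in the transformed variable $x=e^{2\sqrt{2\alpha}r}$: it pushes $\tilde\mu_\gamma$ forward to $\tilde\mu_\gamma^\alpha$, observes that for small $\alpha$ the mass concentrates on $[1,1+\delta]$, reduces WLOG to $g(\mathbf 0)\leq 1-\epsilon$, and then performs a three-way split of $\mathcal S^{n-1}$ into a ``bad slope'' set $H$ (where $\partial_+W_\gamma(1)$ exceeds an $\epsilon$-dependent threshold, shown small via Markov), a ``bounded density'' set $F$, and the remainder $G$. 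Your argument instead packages concavity and the constraint $g\leq 1$ into the single pointwise bound $g_\gamma(r)-g(\mathbf 0)\leq \min\bigl(2c_\gamma\sinh(\sqrt{2\alpha}\,r),\,1\bigr)$ with $c_\gamma=\partial_+W_\gamma(1)$, and then truncates once in $r$ (at a fixed $R$, independent of $\alpha$) and once in the density $\phi=d\tilde\mu_1/d\kappa$; the $\min$ handles the large-slope directions automatically, so no separate set $H$ and no case distinction on $g(\mathbf 0)$ are needed. This buys a shorter and more transparent bookkeeping; the paper's formulation buys a truncation that tracks the scaling variable directly. Two small remarks: you should note that $c_\gamma\geq 0$ also follows from $W_\gamma'\geq 0$ (which the paper already establishes inside the proof of \propref{prop-5} from $g\geq 0$), and in the last display the chain $\int_{\{\phi\leq N\}}Cc_\gamma R\sqrt\alpha\,\phi\,d\kappa\leq CNR\sqrt\alpha\int c_\gamma\,d\kappa$ uses $c_\gamma\geq 0$ to extend the integral to all of $\mathcal S^{n-1}$; both are easy but worth a word. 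Your necessity argument via excursion directions is also fine and makes explicit what the paper only asserts.
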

\begin{proof}
Recall that we have the characterization of $\alpha$-excessive functions by \propref{prop-5}. Taking $g \equiv 1$, we see that $\sup\limits_{1 \geq g \in \mathfrak{S}^{\alpha}} \langle \mu-\delta_{\bf{0}}, g \rangle \geq 0$ for any $\alpha >0$. It is sufficient to show for any $\epsilon>0$, there exists $\alpha_0>0$ such that $ \langle \mu-\delta_{\bf{0}}, g \rangle < \epsilon$ for any $\alpha < \alpha_0$ and $1 \geq g \in \mathfrak{S^{\alpha}}$.

Since $\kappa \gg \tilde{\mu}_1$, we can find $K>\frac{8}{\epsilon}$ such that for any $A \subset \mathcal{B}(\mathcal{S}^{n-1})$, $\kappa(A) < \frac{1}{K}$ implies $\tilde{\mu}_1(A)< \frac{\epsilon}{8}$. Take $\delta= \frac{\epsilon}{4K}$, and  $\alpha_0$ such that for any $\alpha < \alpha_0$   $$\int_{\gamma \in \mathcal{S}^{n-1}} \tilde{\mu}_{\gamma}^{\alpha}([1, 1+\delta]) \ \tilde{\mu}_1( d \gamma ) > 1- \frac{\epsilon}{4}.$$ 
If $C:=g(\mathbf{0})>1-\epsilon$, we automatically have $\langle \mu-\delta_{\mathbf{0}}, g \rangle < 1-C <\epsilon$. So without loss of generality, we assume $C \leq 1- \epsilon$.

Since $W_{\gamma}$ is concave, it is upper bounded on the interval $[1,1+\delta]$ by the linear function $$C+\partial_+W_{\gamma}(1)(x-1).$$  In order to have $$C+\partial_+W_{\gamma}(1)(x-1) \leq \sqrt{x}, \ x \in [1,1+\delta],$$
the derivative $\partial_+W_{\gamma}(1)$ cannot be greater than $\frac{\sqrt{1+\delta}-C}{\delta}$.
Denote by $H$ the collection of $\gamma$ such that $\partial_+W_{\gamma}(1) > \frac{\sqrt{1+\delta}-C}{\delta}$, i.e., $$H:=\left\{ \gamma \in \mathcal{S}^{n-1}:\partial_+W_{\gamma}(1)  > \frac{\sqrt{1+\delta}-C}{\delta} \right\}.$$ By part $(iii)$ of \propref{prop-5} and Markov's inequality, we have $$\kappa(H) \leq \frac{C \delta}{2\sqrt{1+\delta}-2C} \leq \frac{\delta}{2\epsilon}=\frac{1}{8K}<\frac{\epsilon}{64}, $$
and therefore $$\tilde{\mu}_1(H)< \frac{\epsilon}{8}.$$ Denote 
$$F:=\{ \gamma \in \mathcal{S}^{n-1}: \frac{d \tilde{\mu}_1}{d\kappa} \leq K \}, $$
 $$G:=\mathcal{S}^{n-1}\setminus (H \cup F) \subset \{ \gamma \in \mathcal{S}^{n-1}: \partial_+W_{\gamma}(1)  \leq \frac{\sqrt{1+\delta}-C}{\delta} \}.$$ Note that $\int_{\gamma \in \mathcal{S}^{n-1}}  \frac{d \tilde{\mu}_1}{d\kappa}  \ \kappa(d \gamma)=1$. Therefore by Markov's inequality, we have $\kappa(G)<\frac{1}{K}$ and  thus $\tilde{\mu}_1(G) < \frac{\epsilon}{8}.$
According to our choice of $\delta$, it can be seen that
\begin{equation}\label{eq-10}
\begin{aligned}
\mu(g)& =\int_{\gamma \in \mathcal{S}^{n-1}} \tilde{\mu}_1( d \gamma) \int_{0}^{+\infty} g_{\gamma}(r) \ \tilde{\mu}_{\gamma}(dr) \\
&=\int_{\gamma \in \mathcal{S}^{n-1}} \tilde{\mu}_1( d \gamma) \int_{1}^{+\infty} \frac{W_{\gamma}(x)}{\sqrt{x}} \ \tilde{\mu}_{\gamma}^{\alpha}(dx) \\
&\leq \int_{\gamma \in \mathcal{S}^{n-1}} \tilde{\mu}_1( d \gamma) \int_{1}^{1+\delta} \frac{W_{\gamma}(x)}{\sqrt{x}} \ \tilde{\mu}_{\gamma}^{\alpha}(dx)+\frac{\epsilon}{4}. \\
\end{aligned}
\end{equation}
We estimate the term $\frac{W_{\gamma}(x)}{\sqrt{x}}$ in \eqref{eq-10}.  For $\gamma \in H$, we have inequalities $$\int_1^{1+\delta} \frac{W_{\gamma}(x)}{\sqrt{x}} \tilde{\mu}^{\alpha}_{\gamma}(dx) \leq \tilde{\mu}^{\alpha}_{\gamma}([1,1+\delta]) \leq 1,$$
and for $\gamma \not \in H$, 
\begin{equation*}
\begin{aligned}
\int_1^{1+\delta} \frac{W_{\gamma}(x)}{\sqrt{x}} \ \tilde{\mu}^{\alpha}_{\gamma}(dx) & \leq 
\int_1^{1+\delta} \frac{C+\partial_+W_{\gamma}(1)(x-1)}{\sqrt{x}} \ \tilde{\mu}_{\gamma}^{\alpha}(dx)  \leq C+\delta \partial_+W_{\gamma}(1). \\
\end{aligned}
\end{equation*}
Therefore, we obtain the upper bound,
$$\mu(g) \leq C+ \int_{\gamma \in  \mathcal{S}^{n-1} \setminus H} \delta \partial_+W_{\gamma}(1)\  \tilde{\mu}_1( d \gamma) + \tilde{\mu}_1(H)+\frac{\epsilon}{4} .$$
For $\gamma \in F$, we have $$\tilde{\mu}_1(d \gamma) \leq K \kappa( d \gamma),$$ and for $\gamma \in G$, $$\partial_+W_{\gamma}(1) \leq \frac{(\sqrt{1+\delta}-C)}{\delta}.$$ 
In conjunction with part $(iii)$ of \propref{prop-5},  we get 
\begin{equation*}
\begin{aligned}
  \int_{\gamma \in  \mathcal{S}^{n-1}  \setminus H} & \delta \partial_+W_{\gamma}(1)\  \tilde{\mu}_1( d \gamma)  \\
 &\leq  \int_{\gamma \in F} \delta \partial_+W_{\gamma}(1) K \ \kappa(d \gamma) + \int_{\gamma \in G} \frac{\delta(\sqrt{1+\delta}-C)}{\delta} \ \tilde{\mu}_1(d \gamma) \\
& \leq  \frac{\delta K C}{2}+2 \tilde{\mu}_1(G). \\
 \end{aligned}
 \end{equation*}
Now we can conclude the result,
\begin{equation*}
\begin{aligned}
\mu(g) & \leq C+\frac{\delta K C}{2}+2 \tilde{\mu}_1(G) +\tilde{\mu}_1(H)+\frac{ \epsilon}{4} < \delta_{\mathbf{0}}(g)+\epsilon.\\
\end{aligned}
\end{equation*}
\end{proof}

\section{Integrable Solutions} 
In this section, we characterize the spinning measure $\kappa$ such that there exists an integrable solution $\tau$ to the embedding problem. It turns out that the choice of $\kappa$ is unique. Then we construct a solution as the limit of hitting times. 

\begin{prop}\label{prop1}
If there exists a stopping time $\tau$ such that $Z_{\tau} \sim \mu$ and $\mathbb{E}[\tau]< +\infty$, we must have 
\begin{equation*}\label{eq1}
\kappa( d \gamma)=\frac{m_{\gamma}}{m} \tilde{\mu}_1(d \gamma), \tag{$*$}
\end{equation*}
where $m=\int_{\mathbb{R}^n} \abs{z} \ \mu( d z)$ is a positive constant. Moreover, the second moment of $\mu$ must be finite, i.e.,
\begin{equation*}\label{eq2}
\int_{\mathbb{R}^n} \abs{z}^2 \ \mu(dz) < +\infty. \tag{$**$}
\end{equation*}
\end{prop}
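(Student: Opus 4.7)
The plan is to derive both conclusions from optional-sampling identities for two martingales built via the change-of-variable formula (\lemref{lem10}), in the same spirit as the proof of \propref{prop100}. First I would exploit $g(\gamma,r)=r^2$, whose vanishing local-time coefficient makes $R_t^2-t$ a martingale; this single identity will deliver $(**)$ as well as the uniform integrability required in the second step. The second ingredient is the direction-sensitive family $h_{A,B}(\gamma,r)=(\kappa(A)\mathbbm{1}_B(\gamma)-\kappa(B)\mathbbm{1}_A(\gamma))r$, indexed by disjoint Borel sets $A,B\subset\mathcal{S}^{n-1}$, already used in the proof of \propref{prop100}; by design these are martingales whose stopped expectations will link $\kappa$ to the conditional barycenters $m_\gamma$.

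For $(**)$, optional sampling of $R_{\tau\wedge t}^2-(\tau\wedge t)$ gives $\mathbb{E}[R_{\tau\wedge t}^2]=\mathbb{E}[\tau\wedge t]\leq\mathbb{E}[\tau]<+\infty$. Since $\mathbb{E}[\tau]<+\infty$ implies $\tau<+\infty$ almost surely, $R_{\tau\wedge t}\to R_\tau$ pointwise, and Fatou's lemma yields
\[\int_{\mathbb{R}^n}\abs{z}^2\,\mu(dz)=\mathbb{E}[R_\tau^2]\leq\mathbb{E}[\tau]<+\infty,\]
which is $(**)$. In particular $m<+\infty$ by Cauchy--Schwarz, and the uniform bound $\mathbb{E}[R_{\tau\wedge t}^2]\leq\mathbb{E}[\tau]$ exhibits $(R_{\tau\wedge t})_{t\geq 0}$ as $L^2$-bounded and hence uniformly integrable.

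For $(*)$, I would use the pointwise estimate $\abs{h_{A,B}(Z_{\tau\wedge t})}\leq 2R_{\tau\wedge t}$ to transfer the uniform integrability to the martingale $h_{A,B}(Z_{\tau\wedge t})$. Passing $t\to+\infty$ in $\mathbb{E}[h_{A,B}(Z_{\tau\wedge t})]=0$ will give $\mathbb{E}[h_{A,B}(Z_\tau)]=0$, and expanding against $\mu$ produces
\[\kappa(A)\int_B m_\gamma\,\tilde{\mu}_1(d\gamma)\;=\;\kappa(B)\int_A m_\gamma\,\tilde{\mu}_1(d\gamma).\]
Specializing to $B=\mathcal{S}^{n-1}\setminus A$ and using $\int_{\mathcal{S}^{n-1}} m_\gamma\,\tilde{\mu}_1(d\gamma)=m$ together with $\kappa(A)+\kappa(B)=1$ solves the resulting linear equation in $\kappa(A)$, producing $\kappa(A)=m^{-1}\int_A m_\gamma\,\tilde{\mu}_1(d\gamma)$ for every Borel $A\subset\mathcal{S}^{n-1}$, which is $(*)$.

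The only real obstacle is justifying optional sampling at the unbounded stopping time $\tau$; without $\mathbb{E}[\tau]<+\infty$ neither identity would be reachable. With the hypothesis in hand, the martingale $R_t^2-t$ simultaneously produces the second-moment bound and the $L^2$-bound that legitimizes passage to the limit in the $h_{A,B}$ family; the remaining work is bookkeeping consistent with Fubini's theorem.
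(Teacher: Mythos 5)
Your proof is correct and uses exactly the same ingredients as the paper's: the change-of-variable formula (\lemref{lem10}) applied to $g(\gamma,r)=r^2$ and to the family $h_{A,B}$, followed by optional sampling. The one genuine difference is the logical ordering and the extra care it buys you — you establish $(**)$ first via $\tau\wedge t$, monotone convergence and Fatou, obtain the $L^2$-bound $\mathbb{E}[R_{\tau\wedge t}^2]\leq\mathbb{E}[\tau]$, and then use the resulting uniform integrability of $(h_{A,B}(Z_{\tau\wedge t}))_{t\geq 0}$ to rigorously pass to the limit in $\mathbb{E}[h_{A,B}(Z_{\tau\wedge t})]=0$, whereas the paper derives $(*)$ first and applies optional sampling to $h_{A,B}(Z_t)$ at the unbounded time $\tau$ without spelling out that justification. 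Your specialization to $B=\mathcal{S}^{n-1}\setminus A$ and the solution of the resulting linear equation for $\kappa(A)$ is an equivalent way of extracting the constant $m$ that the paper obtains by letting the disjoint pair $(A,B)$ range freely.
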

\begin{proof}
Suppose $\tau$ is a stopping time such that $\mathbb{E}[\tau] < + \infty$ and $Z_{\tau} \sim \mu$. For any disjoint Borel subsets $A, B \subset \mathcal{S}^{n-1}$, define a measurable function on $\mathcal{S}^{n-1} \times \mathbb{R}_+$ as follows, 
$$h_{A,B}(\gamma,r)=(\kappa(A)\mathbbm{1}_B(\gamma)-\kappa(B)\mathbbm{1}_A(\gamma))r.$$
Applying \lemref{lem10}, $h_{A,B}(Z_t)$ is a martingale. Therefore 
\begin{equation*}
\begin{aligned}
0 & =\mathbb{E}[h_{A,B}(Z_0)]=\mathbb{E}[h_{A,B}(Z_{\tau})] \\
&= \kappa(A) \int_B \tilde{\mu}_1( d \gamma) \int_{\mathbb{R}_+} r \ \tilde{\mu}_{\gamma}( d r)-\kappa(B) \int_A \tilde{\mu}_1 (d \gamma) \int_{\mathbb{R}_+} r \ \tilde{\mu}_{\gamma}( d r) \\
&=\kappa(A) \int_B m_{\gamma} \ \tilde{\mu}_1(d \gamma)- \kappa(B) \int_A m_{\gamma} \ \tilde{\mu}_1(d \gamma). \\
\end{aligned}
\end{equation*}
Since the choice of disjoint pair $(A,B)$ is arbitrary, there exists a constant $m$ such that for any $\gamma \in \mathcal{S}^{n-1}$, $$m_{\gamma}\tilde{\mu}_1(d\gamma)=m \kappa(d \gamma).$$ 
Integrating both sides of the above equation over $\mathcal{S}^{n-1}$, we get 
\begin{equation*}
\begin{aligned}
m &=\int_{\mathcal{S}^{n-1}} m \ \kappa(d \gamma) =\int_{\mathcal{S}^{n-1}} m_{\gamma} \ \tilde{\mu}_1(d \gamma) = \int_{\mathbb{R}^n} \abs{z} \ \mu(dz). \\
\end{aligned}
\end{equation*}
Take another measurable function on $\mathcal{S}^{n-1} \times \mathbb{R}_+$, $g(\gamma,r)=r^2.$ Applying \lemref{lem10}, it can be seen that $g(Z_t)-t$ is a martingale. Since $\mathbb{E}[\tau] < +\infty$, we can employ Doob's optional sampling theorem and get 
$$\mathbb{E}[\tau]=\mathbb{E}[g(Z_{\tau})]=\int_{\mathbb{R}^n} \abs{z}^2 \ \mu(d z)< +\infty.$$
\end{proof}

\begin{remark}
The proof of \propref{prop1} implies that $m=\int_{\mathbb{R}^n} \abs{z} \ \mu(dz) < +\infty$. Since $m_{\gamma}$ is always positive, we see that $supp(\tilde{\mu}_1) \subset supp(\kappa)$. And for $\gamma \in supp(\kappa) \setminus supp(\tilde{\mu}_1)$, we must have $m_{\gamma}=+\infty$ which implies it is impossible to stop the process at $(\gamma, m_{\gamma})$.
\end{remark}

In the case of $n=1$,  $S^0$ consists of two directions $\{-, +\}$, and the process $(Z_t)_{t \geq 0}$ becomes a skew Brownian motion. Usually in the Skorokhod embedding problem, we say a target distribution $\mu$ is centered if  $\int_{-\infty}^{+\infty} x \ \mu(dx)  = 0$. Since the spinning measure of Brownian motion is $\kappa(+)=\kappa(-)=\frac{1}{2}$, it can be seen that  $\mu$ is centered if and only if $\tilde{\mu}_1(+) m_+= \tilde{\mu}_1(-)m_-$, which is equivalent to \eqref{eq1}. We generalize the concept of centered to Walsh Brownian motion, which is actually the property of both $\mu$ and $\kappa$. 
\begin{definition}
A pair $(\mu, \kappa)$ is said to be centered if they satisfy \eqref{eq1}. 
\end{definition}

Dubins \cite{MR0234520} presented a different approach to the Skorokhod embedding problem three years after \cite{MR0185620}. Starting with $\mu_0=\delta_0$, he constructed a sequence of finite sets $S_n$. The crucial idea is to transport every element of $S_n$ to two adjacent points in $S_{n+1}$ each time. The stopping time can be easily constructed inductively as hitting times, $$\tau_n=\tau_{n-1}+\tau_{S_n} \circ \theta_{\tau_{n-1}},$$
where $(\theta_t)_{t \geq 0}$ is the shift operator. Note that $B_{\tau_n}$ is finitely supported, and by a careful choice of $S_n$, the limit $\lim\limits_{n \to +\infty} B_{\tau_n}$ is of distribution $\mu$. 

Now by the same spirit of \cite{MR0234520}, we show that \eqref{eq1} \&\eqref{eq2} are also sufficient. The idea is to first stop the process at the barycenter $(m_{\gamma})_{\gamma \in \mathcal{S}^{n-1}}$ on each ray. Then for each $\gamma$, we view the process as a Brownian motion starting with $m_{\gamma}$, and we can employ the construction for Brownian motion.

For any $\nu \in \mathcal{P}([0, +\infty))$,  we introduce a sequence of increasing finite sets $(A^{\nu}_n)_{n \geq 0}$ by induction. 
For simplicity, denote $m^{\nu}_{[a,b)}=\frac{\int_{[a,b)} r \ \nu(dr)}{\nu([a,b))}$ (If $\nu([a,b))=0$, simply take $m^{\nu}_{[a,b)}=a$).  Initialize $A^{\nu}_{0}=\{0, +\infty\}$. Suppose $A^{\nu}_l=\{0=a_1 \leq a_2 \leq \dotso \leq a_{2^{l}+1}=+\infty\}$. Let $$A^{\nu}_{l+1} =A^{\nu}_l \bigcup\limits_{i=1}^{2^{l}} \{ m_{[a_i,a_{i+1})}^{\nu}\},$$
that is the union of $A^{\nu}_l$ and barycenters over each interval $[a_i, a_{i+1}), i =1, \dotso ,2^{l}$. 

Define the stopping sets as
\begin{equation*}
\begin{aligned}
S^{\gamma}_l &:=\{r : r \in A^{\tilde{\mu}_{\gamma}}_l\setminus A^{\tilde{\mu}_{\gamma}}_{l-1}\}, \ l \geq 1; \\
S_l &:=\{(\gamma, r): \gamma \in \mathcal{S}^{n-1}, \ r \in S^{\gamma}_l \},\ l \geq 1. \\
\end{aligned}
\end{equation*}
Take $$ \tau_1=\tau_{S_1} \ \ \ \text{and} \ \ \ \tau_{l}=\tau_{l-1}+\tau_{S_l} \circ \theta_{\tau_{l-1}} \ \text{for} \ l \geq 1 ,$$
where $\tau_S:=\{ t \geq 0: Z_t \in S \}$.  Since $(\tau_l)_{l \geq 1}$ is increasing, $\tau:=\lim\limits_{l \to +\infty} \tau_l$ always exists. In \thmref{thm1} we  show that $\tau< +\infty $ almost surely and $Z_{\tau} \sim \mu$. Let us start with two auxiliary lemmas.

\begin{lemma}\label{lemma1}
Suppose $b=m^{\nu}_{[a_1,a_2)}, b_1=m^{\nu}_{[a_1,b)}, b_2=m^{\nu}_{[b,a_2)}$. Then following equations hold, 
\begin{equation*}
\begin{aligned}
 \nu([a_1,a_2)) \frac{b_2-b}{b_2-b_1}& =\nu([a_1,b)), \\
\nu([a_1,a_2))\big(b^2+(b_2-b)(b-b_1) \big)& =\nu([a_1,b))b_1^2+\nu([b,a_2))b_2^2. \\
\end{aligned}
\end{equation*}
\end{lemma}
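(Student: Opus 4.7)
The plan is to introduce compact notation and reduce both identities to a single balance equation coming from the definition of the barycenter. Let me write $p = \nu([a_1, b))$, $q = \nu([b, a_2))$, and $P = \nu([a_1, a_2)) = p + q$. Splitting the defining integral of $b$ over the two subintervals gives
\begin{equation*}
bP = \int_{[a_1, a_2)} r \, \nu(dr) = \int_{[a_1, b)} r \, \nu(dr) + \int_{[b, a_2)} r \, \nu(dr) = b_1 p + b_2 q.
\end{equation*}
Combined with $P = p + q$, this rewrites as the \emph{martingale identity} $p(b - b_1) = q(b_2 - b)$.

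For the first equation, I would simply solve this identity for $p/q$ to get $p/q = (b_2 - b)/(b - b_1)$, and then compute
\begin{equation*}
\frac{p}{P} = \frac{p}{p + q} = \frac{b_2 - b}{(b - b_1) + (b_2 - b)} = \frac{b_2 - b}{b_2 - b_1},
\end{equation*}
which gives the claim after multiplying by $P$.

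For the second equation, the cleanest route is to recognize the right-hand side as $P$ times the second moment of the two-point distribution on $\{b_1, b_2\}$ with weights $p/P, q/P$, whose mean is $b$ by the balance equation above and whose variance is therefore $(b - b_1)(b_2 - b)$. Making this concrete, I would compute directly:
\begin{equation*}
p b_1^2 + q b_2^2 - P b^2 = p(b_1 - b)(b_1 + b) + q(b_2 - b)(b_2 + b),
\end{equation*}
substitute $p(b - b_1) = q(b_2 - b)$ into the first term, factor out $q(b_2 - b)$, and simplify the bracket $[-(b_1 + b) + (b_2 + b)] = b_2 - b_1$ to get $q(b_2 - b)(b_2 - b_1)$. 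Using the martingale identity once more in the form $q(b_2 - b) = p(b - b_1)$ (so that this also equals $P(b - b_1)(b_2 - b)$ by the first identity) yields exactly $P(b_2 - b)(b - b_1)$, completing the proof.

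There is essentially no obstacle here; the only mild subtlety is handling the degenerate cases where $\nu([a_1, b)) = 0$ or $\nu([b, a_2)) = 0$, in which case one of $b_1, b_2$ is defined by the convention $m^{\nu}_{[a,b)} = a$ and both identities reduce to trivialities, so I would dispose of them in a brief remark before the main computation.
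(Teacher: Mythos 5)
Your proof is correct and uses essentially the same idea as the paper: both start from the barycenter balance $\nu([a_1,a_2))\,b = \nu([a_1,b))\,b_1 + \nu([b,a_2))\,b_2$ and finish by algebraic manipulation, differing only in the bookkeeping (you first isolate the ``martingale identity'' $p(b-b_1)=q(b_2-b)$ and factor differences of squares, while the paper substitutes the convex-combination form of $b$ directly). Your remark about the degenerate cases is a sensible addition but not something the paper addresses.
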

\begin{proof}
Note that $b=\frac{\nu{[a_1,b)}}{\nu{[a_1,a_2)}}b_1+\frac{\nu{[b,a_2)}}{\nu{[a_1,a_2)}}b_2$. Therefore we have 
\begin{equation*}
\begin{aligned}
& \frac{b_2-b}{b_2-b_1}  =\frac{b_2-\frac{\nu{[a_1,b)}}{\nu{[a_1,a_2)}}b_1-\frac{\nu{[b,a_2)}}{\nu{[a_1,a_2)}}b_2}{b_2-b_1}=\frac{\nu{[a_1,b)}}{\nu{[a_1,a_2)}},  
\end{aligned}
\end{equation*}
and 
\begin{equation*}
\begin{aligned}
 b^2+(b_2-b)&(b-b_1)    =b_2b+b_1b-b_1b_2 \\ 
&= \frac{\nu{[a_1,b)}}{\nu{[a_1,a_2)}}b_1b_2+\frac{\nu{[b,a_2)}}{\nu{[a_1,a_2)}}b_2^2+\frac{\nu{[a_1,b)}}{\nu{[a_1,a_2)}}b_1^2+\frac{\nu{[b,a_2)}}{\nu{[a_1,a_2)}}b_1b_2 -b_1b_2 \\
&= \frac{\nu{[a_1,b)}}{\nu{[a_1,a_2)}}b_1^2+\frac{\nu{[b,a_2)}}{\nu{[a_1,a_2)}}b_2^2. \\
\end{aligned}
\end{equation*}
\end{proof}

\begin{lemma}\label{lemma2}
If $\int_{[0,+\infty)} r \ \nu(dr) < +\infty$ and $A^{\nu}_l=\{0=a_1 \leq a_2 \leq \dotso \leq a_{2^{l}+1}=+\infty\}$ is defined as above, then $\nu_{l+1}:=\sum_{i=1}^{2^l} \nu([a_i,a_{i+1})) \delta_{m^{\nu}_{[a_i,a_{i+1})}}
$ weakly converges to $\nu$. 
\end{lemma}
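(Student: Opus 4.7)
The plan is to recognize $\nu_{l+1}$ as the pushforward law of a conditional expectation and then invoke L\'{e}vy's upward theorem. Regard $X: r \mapsto r$ as a random variable on $([0,+\infty), \mathcal{B}, \nu)$; the hypothesis $\int r\, \nu(dr) < +\infty$ is precisely $X \in L^1(\nu)$. Let $\mathcal{F}_l$ be the $\sigma$-algebra generated by the $2^l$ intervals of $A^\nu_l$; because $A^\nu_l \subset A^\nu_{l+1}$, $(\mathcal{F}_l)$ is an increasing filtration. The conditional expectation $Y_l := \mathbb{E}_\nu[X \mid \mathcal{F}_l]$ is constant on each atom $[a_i, a_{i+1})$ of positive $\nu$-mass with value equal to the barycenter $m^\nu_{[a_i, a_{i+1})}$, so the pushforward of $\nu$ under $Y_l$ is exactly $\nu_{l+1}$.

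Since $X \in L^1(\nu)$, the martingale $(Y_l)$ is uniformly integrable; L\'{e}vy's upward theorem then gives $Y_l \to Y_\infty$ both $\nu$-a.s.\ and in $L^1(\nu)$, where $Y_\infty = \mathbb{E}_\nu[X \mid \mathcal{F}_\infty]$ and $\mathcal{F}_\infty = \sigma\bigl(\bigcup_l \mathcal{F}_l\bigr)$. Once one establishes $Y_\infty = X$ $\nu$-a.s., the bounded convergence theorem applied to $f(Y_l)$ for $f \in C_b([0,+\infty))$ yields
\[
\int f \, d\nu_{l+1} \;=\; \mathbb{E}_\nu[f(Y_l)] \;\longrightarrow\; \mathbb{E}_\nu[f(X)] \;=\; \int f\, d\nu,
\]
which is the desired weak convergence $\nu_{l+1} \Rightarrow \nu$.

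The heart of the proof is therefore the identification $Y_\infty = X$ $\nu$-a.s., equivalently, that $X$ is $\mathcal{F}_\infty$-measurable modulo $\nu$-null sets. Fix $r \in \mathrm{supp}(\nu)$ and let $I_l(r) = [a_l, b_l)$ denote the partition atom of $A^\nu_l$ containing $r$; the left endpoints rise to some $a_\infty \le r$ and the right endpoints fall to some $b_\infty \ge r$. The key geometric observation is that each refinement splits $I_l(r)$ exactly at its barycenter $c_l = m^\nu_{I_l(r)}$, which forces $c_l \le a_\infty$ or $c_l \ge b_\infty$ for all large $l$; a short integral estimate then shows that if $a_\infty < r < b_\infty$, then $\nu((a_\infty, b_\infty)) = 0$, contradicting $r \in \mathrm{supp}(\nu)$, and the boundary cases $a_\infty = r$ or $b_\infty = r$ are handled analogously (if the barycenters persistently land on the ``wrong side,'' the $\mathcal{F}_\infty$-atom of $r$ must have zero $\nu$-measure, which only occurs for a $\nu$-null set of $r$). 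The unbounded tail $[a_{2^l}, +\infty)$ is controlled separately: its left endpoint is nondecreasing, and the finiteness of $\int r\, \nu(dr)$ forces it either to diverge (tail mass vanishes) or to approach an atom of $\nu$ carrying the residual tail mass. Together, these yield $Y_l(r) \to r$ for $\nu$-a.e.\ $r$.

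The main technical obstacle is this barycenter-separation argument, since one must handle possibly asymmetric splits and keep track of atoms of $\nu$ sitting at the limiting endpoints. Once $Y_\infty = X$ is in hand, the remainder of the proof is a clean invocation of martingale convergence together with the bounded convergence theorem.
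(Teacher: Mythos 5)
Your proposal is correct in outline, and it takes a genuinely different route from the paper. The paper's own argument is a short Riemann-sum/density argument: it asserts that $\bigcup_l A^\nu_l$ is dense in $\mathrm{supp}(\nu)$, then writes $\int f\,d\nu$ as the limit of $\sum_i \nu([a_i,a_{i+1}))\,f(m^\nu_{[a_i,a_{i+1})})$. Your martingale formulation is cleaner in one important respect: identifying $\nu_{l+1}$ as the law of $Y_l = \mathbb{E}_\nu[X\mid\mathcal{F}_l]$ and invoking L\'evy's upward theorem packages the uniform integrability, the unbounded right tail $[a_{2^l},+\infty)$, and the bounded-continuous test functions into standard machinery, whereas the paper's density claim, taken literally, does not by itself yield the Riemann-sum convergence (density does not give mesh $\to 0$, and one still has to deal with the non-compact tail). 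Both proofs, however, funnel into the same essential fact, namely that the partition atom containing $r$ shrinks to $\{r\}$ for $\nu$-a.e.\ $r$: this is your ``$Y_\infty = X$ $\nu$-a.s.,'' and it is the implicit content of the paper's density statement. You have correctly identified this as the technical crux, and your barycenter-separation sketch points the right way (since $c_l \in \{a_{l+1},b_{l+1}\}$ and $c_l$ converges, eventually only one endpoint moves, and passing to the limit in $c_l = m^\nu_{[a_l,b_l)}$ forces the open interval $(a_\infty,b_\infty)$ to be $\nu$-null, so any surviving mass in the limiting atom sits at the single endpoint to which $c_l$ converges). But as written this step is still only a sketch: to make it complete you should observe that the distinct nondegenerate limiting atoms form a countable collection of pairwise disjoint intervals, and that on each such atom the $\nu$-mass is concentrated at the endpoint $Y_\infty$ lands on, which already gives $Y_\infty = X$ $\nu$-a.e.\ on that atom without a separate treatment of ``boundary cases.'' With that detail filled in, your argument is a complete and somewhat more robust alternative to the paper's.
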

\begin{proof}
It is easily seen that $\lim\limits_{l \to +\infty}A_l^{\nu}$ is dense in $supp(\nu)$.  Therefore for any continuous bounded function $f \in \mathcal{C}_b([0,+\infty))$, we have 
\begin{equation*}
\begin{aligned}
 \int_{[0,+\infty)} f(r) \ \nu(dr)=& \int_{supp(\nu)} f(r) \ \nu(dr) \\
=& \lim\limits_{l \to +\infty} \sum_{i=1}^{2^l} \nu([a_i,a_{i+1})) f(m^{\nu}_{[a_i,a_{i+1})})=\lim\limits_{l \to +\infty} \nu_{l+1}(f),
\end{aligned}
\end{equation*}
which concludes the result.
\end{proof}

\begin{thm}\label{thm1}
Suppose $\mu$ satisfies \eqref{eq1} and \eqref{eq2}. Then $\tau$ constructed above satisfies $\mathbb{E}[\tau] < +\infty$ and $Z_{\tau} \sim \mu$.
\end{thm}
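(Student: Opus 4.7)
My plan is to mimic Dubins' inductive construction \cite{MR0234520} for the classical Skorokhod embedding, lifted to the Walsh setting: at time $\tau_1$, the process picks a random ray $\gamma$, distributed as $\tilde{\mu}_1$, and lands at the barycenter $m_\gamma$; conditionally on $\gamma$, it then behaves as a one-dimensional Brownian motion on the $\gamma$-ray until it hits the origin, and Dubins' dyadic refinement stops it at successive conditional barycenters of $\tilde{\mu}_\gamma$. I will track the law of $Z_{\tau_l}$ by induction on $l$ and conclude via weak convergence of the dyadic approximations together with a uniform second-moment bound. The base case ($l=1$) is an application of \propref{prop100} with $\rho(\gamma)=m_\gamma$: using the centering identity \eqref{eq1} to substitute $\kappa(d\gamma)=\frac{m_\gamma}{m}\tilde{\mu}_1(d\gamma)$, one finds $\int 1/m_\beta\,\kappa(d\beta) = 1/m$, which yields $\mathbb{P}^{\mathbf{0}}[\Gamma_{\tau_1}\in d\gamma]=\tilde{\mu}_1(d\gamma)$ and $\mathbb{E}[\tau_1]=\int m_\gamma^2\,\tilde{\mu}_1(d\gamma)$, so $Z_{\tau_1}\sim \mu_1 := \int\tilde{\mu}_1(d\gamma)\,\delta_{(\gamma,m_\gamma)}$.

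For the induction, define $\mu_l$ as the discrete approximation of $\mu$ that, on each ray $\gamma$, places mass $\tilde{\mu}_\gamma([a_i^\gamma,a_{i+1}^\gamma))$ at the level-$l$ barycenter $b_i^\gamma$, integrated against $\tilde{\mu}_1(d\gamma)$. Assuming $Z_{\tau_l}\sim\mu_l$, condition on $Z_{\tau_l}=(\gamma,b)$ with $b>0$: by the strong Markov property, the shifted process on the $\gamma$-ray is a one-dimensional Brownian motion until it reaches $0$, so from $b$ it exits the interval $(b_1,b_2)$ formed by the two adjacent new barycenters (both in $S^\gamma_{l+1}$) with the affine probabilities $(b_2-b)/(b_2-b_1)$ and $(b-b_1)/(b_2-b_1)$. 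The first identity in \lemref{lemma1} identifies these with the conditional $\tilde{\mu}_\gamma$-mass ratios $\tilde{\mu}_\gamma([a_i^\gamma,b))/\tilde{\mu}_\gamma([a_i^\gamma,a_{i+1}^\gamma))$ and its complement, closing the induction and yielding $Z_{\tau_{l+1}}\sim\mu_{l+1}$.

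For the moment bound, \lemref{lem10} applied to $g(\gamma,r)=r^2$ gives that $R_t^2-t$ is a martingale. Optional stopping at $\tau_l\wedge n$ followed by $n\to\infty$ (using that $Z_{\tau_l}$ is supported on finitely many points, so $R_{\tau_l}^2$ is bounded) yields $\mathbb{E}[\tau_l] = \int r^2\,\mu_l(d(\gamma,r))$; Jensen's inequality bounds this by $\int|z|^2\,\mu(dz)<+\infty$ thanks to \eqref{eq2}, while the second identity in \lemref{lemma1} also shows monotonicity in $l$. Since $\tau_l\uparrow\tau$, monotone convergence gives $\mathbb{E}[\tau]=\lim_l\mathbb{E}[\tau_l]<+\infty$, so $\tau<+\infty$ a.s. Continuity of $Z$ then yields $Z_{\tau_l}\to Z_\tau$ a.s., while \lemref{lemma2} gives $\mu_l\to\mu$ weakly; together these give $Z_\tau\sim\mu$.

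The main obstacle lies in the induction when $b_1=0$, i.e.\ when $\tilde{\mu}_\gamma$ concentrates all mass below $b$ at the origin: the process may then hit $\mathbf{0}$ and be redirected to another ray before landing in $S_{l+1}$, so the clean inductive picture breaks and extra bookkeeping is needed to reconcile with the extended definition of $\tilde{\mu}$ on $\mathcal{S}^{n-1}\times\{0\}$. Fortunately, both the second-moment bound and the weak convergence $\mu_l\to\mu$ are robust to such relabeling at the origin, so the final conclusion $\mathbb{E}[\tau]<+\infty$ and $Z_\tau\sim\mu$ still goes through in the limit.
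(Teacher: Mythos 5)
Your argument follows the paper's proof essentially verbatim: \propref{prop100} for the base case, the strong Markov property combined with \lemref{lemma1} for the inductive law and moment identities, the martingale $R_t^2-t$ from \lemref{lem10} with optional stopping for the second moments, and \lemref{lemma2} together with pathwise continuity of $Z$ to conclude. That core is correct and is the same route as the paper.

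The origin issue you flag is the right one to worry about, but your resolution of it is not adequate. ``Robust to relabeling at the origin'' would be the right intuition if, upon reaching $\mathbf{0}$, the process merely forgot which ray it came from while staying put; it does not stay put. If $\mathbf{0}\notin S_{l+1}$, then from $\mathbf{0}$ the Walsh process restarts its excursions, chooses a fresh direction according to $\kappa$, and stops the first time an excursion reaches $S_{l+1}$ at some strictly positive radius on a generally different ray. Mass that the approximating law is supposed to place at $\mathbf{0}$ is thereby pushed out to positive radius, so the law of $Z_{\tau_{l+1}}$ would differ from the target approximation as a measure on $\mathbb{R}^n$ --- not just in its $\gamma$-labels --- and both the conditional-distribution claim and the $\mathbb{E}[\tau_{l+1}]$ formula in the induction would fail. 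The paper's resolution is \reref{re2}: a barycenter $m^{\tilde{\mu}_{\gamma}}_{[a_i,a_{i+1})}$ can be zero only when $a_i=0$, and then the barycenters of \emph{all} subsequent subintervals of $[0,a_{i+1})$ are also zero, so that $0$ is itself one of the new stopping levels in $S^{\gamma}_{l+1}$. Consequently $\tau_{S_{l+1}}\circ\theta_{\tau_l}=0$ from $\mathbf{0}$, the path is absorbed there, and the re-randomization of direction you are worried about never occurs. That absorption argument is the missing piece in your writeup; without it, the final hand-wave does not close the induction.
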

 \begin{proof}

 We show it by calculating distribution of $(Z_{\tau_l})_{l \geq 1}$ and expectations $(\mathbb{E}[\tau_l])_{l \geq 1}$. Note that by definition of the stopping sets, $S_1$ is just $\{(\gamma, m_\gamma): \ \gamma \in  \mathcal{S}^{n-1}\}$. Then by \propref{prop100}, we have 
 \begin{equation*}
 \begin{aligned}
 \mathbb{P}[\Gamma_{\tau_1} \in d \gamma]=& \frac{\frac{1}{m_{\gamma}}  \kappa(d \gamma)}{\int_{\mathcal{S}^{n-1}} \frac{1}{m_{\alpha}} \ \kappa(d \alpha)}=\frac{\frac{1}{m_{\gamma}}  \kappa(d \gamma)}{\int_{\mathcal{S}^{n-1}} \frac{1}{m} \ \tilde{\mu}_1(d \alpha)} \\
 =& \frac{m}{m_{\gamma}} \kappa(d \gamma)=\tilde{\mu}_1(d \gamma).\\
 \end{aligned}
 \end{equation*}
Therefore, we have $Z_{\tau_1} \sim \tilde{\mu}_1 \times \delta_{m_{\gamma}}$. By assumption \eqref{eq2}, we have $$\int_{ \mathcal{S}^{n-1}} m_{\gamma}^2 \ \tilde{\mu}_1 (d \gamma) \leq \int_{\mathbb{R}^n} \abs{z}^2 \ \mu(dz)< + \infty. $$ 
Applying Doob's optional sampling theorem to the martingale $\abs{Z_t}^2-t$, we obtain $$\mathbb{E}[\tau_1]=\int_{ \mathcal{S}^{n-1}} m_{\gamma}^2 \ \tilde{\mu}_1 (d \gamma) .$$

It can be seen that if $\mathbf{0} \in S_n$ for any $n \geq 1$, it would be absorbed there (see \reref{re2}). Therefore we have $\Gamma_{\tau_1}=\Gamma_{\tau_n}, n\geq 1$. In order to calculate the distribution of $Z_{\tau_n}, n \geq 2$, we only need its conditional distribution on each ray.
Suppose $A^{\tilde{\mu}_{\gamma}}_l=\{0=a^{\gamma}_1<a^{\gamma}_2<\dotso<a^{\gamma}_{2^l+1}=+\infty\}, \gamma \in\mathcal{S}^{n-1}$. We prove by induction that 
\begin{equation}\label{eq3}
\mathbb{P}[Z_{\tau_{l+1}}=(\gamma, m^{\tilde{\mu}_{\gamma}}_{[a^{\gamma}_i, a^{\gamma}_{i+1})})|\Gamma_{\tau_{l+1}}=\gamma]=\tilde{\mu}_{\gamma}([a^{\gamma}_i,a^{\gamma}_{i+1})), \ i=1,\dotso,2^l, \ \gamma \in \mathcal{S}^{n-1},  \tag{I} 
\end{equation}
\begin{equation}\label{eq4}
 \mathbb{E}[\tau_{l+1}]=\int_{\mathcal{S}^{n-1}} \sum_{i=1}^{2^l} \tilde{\mu}_{\gamma}([a^{\gamma}_i,a^{\gamma}_{i+1})) (m^{\tilde{\mu}_{\gamma}}_{[a^{\gamma}_i,a^{\gamma}_{i+1})})^2  \ \tilde{\mu}_1(d\gamma). \tag{II}
\end{equation}

The first part of the proof has shown it is true for $l=0$. Assume the claim holds for $l \geq 1$. We calculate the conditional distribution of $Z_{\tau_{l+1}}$ and $\mathbb{E}[\tau_{l+1}]$. 

It is easily seen that $$A_{l+1}^{\tilde{\mu}_{\gamma}}=\{0=a_1^{\gamma}\leq b_1^{\gamma} \leq a_2^{\gamma}<b_2^{\gamma}\leq \dotso \leq a_{2^l+1}^{\gamma}=+\infty \} , \ \gamma \in \mathcal{S}^{n-1}, $$
where $b_i^{\gamma}=m_{[a_i^{\gamma},a^{\gamma}_{i+1})}^{\tilde{\mu}_{\gamma}}, \ i=1,\dotso, 2^l.$ Conditioning on $Z_{\tau_{l+1}}=(\gamma, b_i^{\gamma})$, $Z_{\tau_{l+2}}$ can only hit $(\gamma, m_{[a_i^{\gamma},b_i^{\gamma})}^{\tilde{\mu}_{\gamma}})$ or $(\gamma, m_{[b_i^{\gamma},a_{i+1}^{\gamma})}^{\tilde{\mu}_{\gamma}})$ by definition of $S_{l+2}^{\gamma}$. More precisely, due to strong Markov property of $(Z_t)_{t \geq 0}$, we obtain
\begin{equation*}
\begin{aligned}
\mathbb{P}& [Z_{\tau_{l+2}}= (\gamma,m^{\tilde{\mu}_{\gamma}}_{[a_i^{\gamma},b_i^{\gamma})})|Z_{\tau_{l+1}}
=(\gamma,b_i^{\gamma})] \\
&= \mathbb{P}[Z_{\tau_{l+1}+\tau_{S_{l+2}^{\gamma}} \circ \theta_{\tau_{l+1}}}=(\gamma,m^{\tilde{\mu}_{\gamma}}_{[a_i^{\gamma},b_i^{\gamma})})|Z_{\tau_{l+1}}=(\gamma,b_i^{\gamma})] =\mathbb{P}_{b_i^{\gamma}}[R_{\tau_{S^{\gamma}_{l+2}}}=m^{\tilde{\mu}_{\gamma}}_{[a_i^{\gamma},b_i^{\gamma})}], \\
\end{aligned}
\end{equation*}
where $(R_t)_{t \geq 0}$ behaves like Brownian motion before the hitting time. So it is the probability that a Brownian motion starting from $b_i^{\gamma}$ hits $m^{\tilde{\mu}_{\gamma}}_{[a_i^{\gamma},b_i^{\gamma})}$ before $m^{\tilde{\mu}_{\gamma}}_{[b_i^{\gamma},a_{i+1}^{\gamma})}$, which is well-known as $\frac{m^{\tilde{\mu}_{\gamma}}_{[b_i^{\gamma},a_{i+1}^{\gamma})}-b_i^{\gamma}}{m^{\tilde{\mu}_{\gamma}}_{[b_i^{\gamma},a_{i+1}^{\gamma})}-m^{\tilde{\mu}_{\gamma}}_{[a_i^{\gamma},b_i^{\gamma})}}$. Note that if $Z_t$ hits $m^{\tilde{\mu}_{\gamma}}_{[a_i^{\gamma},b_i^{\gamma})}$ at $\tau_{l+2}$, it must be at $b^{\gamma}_i$ at $\tau_{l+1}$. Therefore we conclude \eqref{eq3} by the following, 
\begin{equation*}
\begin{aligned}
\mathbb{P}[Z_{\tau_{l+2}}& = (\gamma,m^{\tilde{\mu}_{\gamma}}_{[a_i^{\gamma},b_i^{\gamma})}) |\Gamma_{\tau_{l+2}}=\gamma] \\
= & \mathbb{P}[Z_{\tau_{l+1}}=(\gamma,b_i^{\gamma})|\Gamma_{\tau_{l+1}}=\gamma]\ \mathbb{P}[Z_{\tau_{l+2}}=(\gamma,m^{\tilde{\mu}_{\gamma}}_{[a_i^{\gamma},b_i^{\gamma})})|Z_{\tau_{l+1}}=(\gamma,b_i^{\gamma})] \\
=&\tilde{\mu}_{\gamma}([a_i^{\gamma},a_{i+1}^{\gamma})) \      \frac{m^{\tilde{\mu}_{\gamma}}_{[b_i^{\gamma},a_{i+1}^{\gamma})}-b_i^{\gamma}}{m^{\tilde{\mu}_{\gamma}}_{[b_i^{\gamma},a_{i+1}^{\gamma})}-m^{\tilde{\mu}_{\gamma}}_{[a_i^{\gamma},b_i^{\gamma})}}=\tilde{\mu}_{\gamma}([a_i^{\gamma},b_i^{\gamma})), \\
\end{aligned}
\end{equation*}
where the last equation follows from \lemref{lemma1}.
Again by strong Markov property of $(Z_t)_{t \geq0}$, we obtain
\begin{equation*}
\begin{aligned}
 \mathbb{E}[\tau_{l+2}] =& \mathbb{E}[\tau_{l+1}]+\mathbb{E}[\mathbb{E}[\tau_{S_{l+2}} \circ \theta_{\tau_{l+1}}|Z_{\tau_{l+1}}]]  \\
=& \mathbb{E}[\tau_{l+1}]+ \int_{\mathcal{S}^{n-1}} \tilde{\mu}_1(d\gamma)
\big(\sum_{i=1}^{2^l} \tilde{\mu}_{\gamma}([a_i^{\gamma},a_{i+1}^{\gamma})) \mathbb{E}_{b_i^{\gamma}}[\tau_{S_{l+2}^{\gamma}}]                                             \big).
\end{aligned}
\end{equation*}
Note that $\mathbb{E}_{b_i^{\gamma}}[\tau_{S_{l+2}^{\gamma}}]$ is the expected exit time of a Brownian motion, starting from $b_i^{\gamma}$ and leaving the inteval $[m^{\tilde{\mu}_{\gamma}}_{[a_i^{\gamma},b_i^{\gamma})},m^{\tilde{\mu}_{\gamma}}_{[b_i^{\gamma},a_{i+1}^{\gamma})}]$. It is well-known as $(m^{\tilde{\mu}_{\gamma}}_{[b_i^{\gamma},a_{i+1}^{\gamma})}-b_i^{\gamma})( b_i^{\gamma}-m^{\tilde{\mu}_{\gamma}}_{[a_i^{\gamma},b_i^{\gamma})})$. Therefore we conclude \eqref{eq4} by using \lemref{lemma1},
\begin{equation*}
\begin{aligned}
\mathbb{E} [\tau_{l+2}]  
=& \int_{ \mathcal{S}^{n-1}} \tilde{\mu}_1(d\gamma) \bigg( \sum_{i=1}^{2^l} \tilde{\mu}_{\gamma}([a_i^{\gamma},a_{i+1}^{\gamma}))\big( (b_i^{\gamma})^2 + (m^{\tilde{\mu}_{\gamma}}_{[b_i^{\gamma},a_{i+1}^{\gamma})}-b_i^{\gamma})( b_i^{\gamma}-m^{\tilde{\mu}_{\gamma}}_{[a_i^{\gamma},b_i^{\gamma})})  \big)    \bigg) \\
=& \int_{ \mathcal{S}^{n-1}} \tilde{\mu}_1(d\gamma) \bigg(    \sum_{i=1}^{2^l} \big( \tilde{\mu}_{\gamma}([a_i^{\gamma},b_i^{\gamma}))(m^{\tilde{\mu}_{\gamma}}_{[a_i^{\gamma},b_i^{\gamma})})^2   +   \tilde{\mu}_{\gamma}([b_i^{\gamma},a_{i+1}^{\gamma}))(m^{\tilde{\mu}_{\gamma}}_{[b_i^{\gamma},a_{i+1}^{\gamma})})^2 \big) \bigg)
\end{aligned}
\end{equation*}

In the end, according to monotone convergence theorem and \lemref{lemma2}, we have $$\mathbb{E}[\tau]=\lim\limits_{l \to +\infty}\mathbb{E}[\tau_l]=\int_{\mathbb{R}^n} \abs{z}^2 \ \mu({dz}) < +\infty.$$ 
And since $(Z_t)_{t \geq 0}$ is continuous, the following equation holds for any bounded continuous $f \in \mathcal{C}_b(\mathbb{R}^n)$, 
$$\mathbb{E}[f(Z_{\tau})]=\lim\limits_{l \to +\infty}\mathbb{E}[f(Z_{\tau_l})]=\int_{\mathbb{R}^n} f(z) \ \mu(dz), $$
which implies $Z_{\tau} \sim \mu$.
 \end{proof}

\begin{remark}\label{re2}
If $0=b_i:=m^{\tilde{\mu}^{\gamma}}_{[a_i,a_{i+1})}$ for some $l \geq 1$, then $0=a_i=b_i=m^{\tilde{\mu}^{\gamma}}_{[b_i,a_{i+1})}$.  So that $0 \in S^{\gamma}_{l+1}$ and we have $\mathbb{E}[\tau_{l+1}| Z_{\tau_l}=(\gamma, b_i)]=\tau_l+\mathbb{E}_{0}[\tau_{S^{\gamma}_{l+1}}]=\tau_l.$ Therefore $Z_t$ may hit origin $\bf{0}$ but it would then be absorbed and would not visit other rays. 

\end{remark}

\section{A generalization of Vallois' Skorokhod embedding}
Chacon and Walsh \cite{MR0445598} gave a general construction of the Skorokhod embedding based on one dimensional potential theory. Later in \cite{MR2307397}, Cox and Hobson showed that the construction of both Az\'{e}ma-Yor \cite{MR544832} and Vallois \cite{MR1162722} can be interpreted in the framework of \cite{MR0445598}. For a strictly convex function $\Psi$, Vallois also proved that his solution minimizes $\mathbb{E}[\Psi(L_t^0)]$ among all the minimal solutions, where $(L_t^0)_{t \geq 0}$ is the local time of Brownian motion. Now we  generalize this result to Walsh Brownian motion by using the method established in \cite{MR0445598} and \cite{MR2307397}.

Suppose that the target distribution $\mu \in \mathcal{P}(\mathbb{R}^n)$ has a finite first moment, and that the spinning measure $\kappa$ is centered.
Let $\Psi$ be a strictly convex function such that $\Psi^{'}(+\infty) \leq K$ for some positive constant $K$. Let $\mathcal{T}$ be the collection of stopping times $\tau$ such that the stopped process $(Z_{t \wedge \tau})_{t \geq 0}$  is uniform integrable and $Z_{\tau}$ is of distribution $\mu$. Since the target distribution $\mu$ may not have finite second moment, stopping time $\tau \in \mathcal{T}$ does not have to be integrable (see \propref{prop1}).  We consider the optimization problem 
\begin{equation*}\label{eq5}
\inf\limits_{\tau \in \mathcal{T}} \mathbb{E}[\Psi(L^{Z}_{\tau})]. \tag{$\star$}
\end{equation*}

First we represent a sufficient condition for the uniform integrability mentioned above. Choose any $A \in \mathcal{B}(\mathcal{S}^{n-1})$ such that $1>\kappa(A) >0$, and recall $$h_{A,A^c}(\gamma,r)=(\kappa(A) \mathbbm{1}_{A^c}(\gamma)-\kappa(A^c)\mathbbm{1}_{A}(\gamma))r.$$ Define the hitting time for $x \in \mathbb{R}$,
\begin{equation}\label{ss}
H_x=\inf \{ t \geq 0: h_{A,A^c}(Z_t)=x \}. \tag{$\star$ $\star$}
\end{equation}
\begin{lemma}\label{lemmas}
If $ x \mathbb{P}[\tau > H_x] \to 0$ as $x \to \pm \infty$, then the stopped process $(Z_{t \wedge \tau})_{t \geq 0}$ is uniformly integrable.
\end{lemma}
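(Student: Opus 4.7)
The plan is to reduce uniform integrability of $(Z_{t\wedge\tau})_{t\ge 0}$ to a one-dimensional martingale argument. Applying \lemref{lem10} to $h=h_{A,A^c}$, one has $h''_\gamma\equiv 0$ (linearity on each ray) and
\begin{equation*}
\int_{\mathcal S^{n-1}}\partial_+ h_\gamma(0)\,\kappa(d\gamma)=\kappa(A)\kappa(A^c)-\kappa(A^c)\kappa(A)=0,
\end{equation*}
so the local-time term drops out and $M_t:=h_{A,A^c}(Z_t)=\int_0^t\mathbbm 1_{\{R_s\ne 0\}}h'_{\Gamma_s}\,dB^Z_s$ is a continuous martingale with $c_1^2\, t\le\langle M\rangle_t\le c_2^2\, t$, where $c_1:=\min(\kappa(A),\kappa(A^c))>0$ and $c_2:=\max(\kappa(A),\kappa(A^c))$; in particular $\langle M\rangle_\infty=\infty$ almost surely.

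A direct case split on $\{\Gamma_t\in A\}$ versus $\{\Gamma_t\in A^c\}$ yields the pointwise bound $c_1|Z_t|\le|M_t|\le c_2|Z_t|$, hence $|Z_{t\wedge\tau}|\le c_1^{-1}|M_{t\wedge\tau}|$. It therefore suffices to show that the stopped martingale $(M_{t\wedge\tau})_{t\ge 0}$ is uniformly integrable, for which there are two essentially equivalent routes: either work directly with $M$, or invoke Dambis--Dubins--Schwarz (legitimate by the lower bound on $\langle M\rangle$) to write $M_t=W_{\langle M\rangle_t}$ for a Brownian motion $W$ and pass to $\sigma:=\langle M\rangle_\tau$. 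Under the time change the hypothesis becomes $x\,\mathbb P[\sigma>\widetilde H_x]\to 0$ as $x\to\pm\infty$, with $\widetilde H_x:=\inf\{s:W_s=x\}$.

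Either way the problem reduces to Vallois' classical uniform-integrability criterion for a stopped continuous martingale, which is the main obstacle. The backbone is the optional-stopping identity
\begin{equation*}
\mathbb E\bigl[M_{s\wedge\tau}\,\mathbbm 1_{\{H_N\le s\wedge\tau\}}\bigr]=N\,\mathbb P[H_N\le s\wedge\tau]
\end{equation*}
obtained by applying Doob's optional stopping to $M^\tau$ at the bounded stopping time $H_N\wedge s$, together with its analogue at $H_{-N}$. Combined with the strong Markov property at $H_{\pm N}$ to absorb the contribution of excursions past $\pm N$, these identities let one bound $\sup_s\mathbb E[|M_{s\wedge\tau}|\,\mathbbm 1_{\{|M_{s\wedge\tau}|>N\}}]$ by a multiple of $N\,\mathbb P[\tau>H_N]+N\,\mathbb P[\tau>H_{-N}]$, which tends to $0$ as $N\to\infty$ by assumption. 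This establishes uniform integrability of $(M_{t\wedge\tau})$, and hence of $(Z_{t\wedge\tau})$.
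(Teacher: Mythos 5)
Your reduction to the one-dimensional martingale $M_t=h_{A,A^c}(Z_t)$ is sound and is exactly the reduction the paper uses (the pointwise sandwich $c_1|Z_t|\le|M_t|\le c_2|Z_t|$ is a nice way to make the equivalence of uniform integrabilities explicit). The Dambis--Dubins--Schwarz reformulation is also legitimate but doesn't buy you anything: the difficulty is identical for $M$ and for $W_{\langle M\rangle_\cdot}$. The genuine gap is in the last paragraph, which is where all the work lives. The optional-stopping identity
$\mathbb E[M_{s\wedge\tau}\,\mathbbm 1_{\{H_N\le s\wedge\tau\}}]=N\,\mathbb P[H_N\le s\wedge\tau]$
is correct, but it is an identity for a \emph{signed} integrand, and it does not control the quantity you actually need, $\mathbb E[|M_{s\wedge\tau}|\,\mathbbm 1_{\{|M_{s\wedge\tau}|>N\}}]$. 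Writing
$\mathbb E[M_{s\wedge\tau}\mathbbm 1_{\{M_{s\wedge\tau}>N\}}]
=N\,\mathbb P[H_N\le s\wedge\tau]-\mathbb E[M_{s\wedge\tau}\mathbbm 1_{\{H_N\le s\wedge\tau,\;M_{s\wedge\tau}\le N\}}]$
shows that the error term involves the negative part of $M_{s\wedge\tau}$ on $\{H_N\le s\wedge\tau\}$, which is precisely of the same order as the object being bounded; the argument is circular. A single application of the strong Markov property at $H_N$ only controls the first upcrossing of level $N$; after $M$ returns and crosses $-N$, it can cross $N$ again, and the hypothesis says nothing about the conditional probabilities of those further crossings. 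As a result, the claimed bound $\sup_s\mathbb E[|M_{s\wedge\tau}|\mathbbm 1_{\{|M_{s\wedge\tau}|>N\}}]\lesssim N\mathbb P[\tau>H_N]+N\mathbb P[\tau>H_{-N}]$ is not justified by the sketch, and I do not believe it holds with a constant uniform in $N$ without further input.

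The paper avoids this entirely by verifying a different (and sharper) characterization of uniform integrability, taken from Cox and Hobson: it shows that $\mathbb E[h_{A,A^c}(Z_\tau)\mid\mathcal F_\upsilon]=h_{A,A^c}(Z_\upsilon)$ for every stopping time $\upsilon\le\tau$. For a fixed $\upsilon$ and $F\in\mathcal F_\upsilon$, the stopped process $M^{H_x}$ (for $x<0$) is a supermartingale bounded below, giving $\mathbb E[M_{\upsilon\wedge H_x}\mathbbm 1_{F_x}]\ge\mathbb E[M_{\tau\wedge H_x}\mathbbm 1_{F_x}]$; letting $x\to-\infty$, the hypothesis $x\,\mathbb P[\tau>H_x]\to 0$ kills exactly the boundary term $x\,\mathbb P[F\cap\{\upsilon<H_x<\tau\}]$, and integrability of $M_\upsilon$ (their Lemma~9) lets one pass to the limit. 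Repeating with $x>0$ gives the reverse inequality. This route uses the hypothesis once, at the level of first passage, which is all it can deliver; your route would require iterated control over repeated passages, which is not available. If you want a self-contained proof along your lines, you should replace the tail-bound claim with the conditional-expectation argument, or cite the Cox--Hobson criterion directly.
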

\begin{proof}
The argument is part of \cite[Theorem 5]{MR2240692} and we repeat here for readers' convenience. Note that the uniform integrability of stopped processes $Z^{\tau}, R^{\tau}, h_{A,A^c}(Z^{\tau})$ are equivalent, and the process $(h_{A,A^c}(Z_t))_{t \geq 0}$ is a martingale, so it is sufficient to show that for any stopping time $\upsilon \leq \tau$, $$\mathbb{E}[h_{A,A^c}(Z_{\tau})| \mathcal{F}_{\upsilon}] = h_{A,A^c}(Z_{\upsilon}).$$

Suppose $x<0$, $F \in \mathcal{F}_{\upsilon}$, and set $F_x=F \cap \{\upsilon < H_x\}$. Since $h_{A,A^c}(Z_{t \wedge H_x})$ is a supermartingale, we have 
\begin{equation*}
\begin{aligned}
\mathbb{E}[h_{A,A^c}(Z_{\upsilon \wedge H_x})\mathbbm{1}_{F_x}] \geq \mathbb{E}[h_{A,A^c}(Z_{\tau \wedge H_x})\mathbbm{1}_{F_x}]. 
\end{aligned}
\end{equation*}
By replacing $B_t$ with $h_{A,A^c}(Z_t)$ in \cite[Lemma 9]{MR2240692}, we know $h_{A,A^c}(Z_{\upsilon})$ is integrable. As a result of the dominated convergence theorem, the left-hand side converges to $\mathbb{E}[h_{A,A^c}(Z_{\upsilon}) \mathbbm{1}_F]$ as $x \to -\infty$. It is noted that the term on the right is equal to 
$$\mathbb{E}[h_{A,A^c}(Z_{\tau})\mathbbm{1}_{F_x \cap \{\tau < H_x\}}]+x\mathbb{P}[F \cap \{\upsilon < H_x < \tau\}],$$
which converges to $\mathbb{E}[h_{A,A^c}(Z_{\tau})\mathbbm{1}_F]$ according to our assumption. We conclude $\mathbb{E}[h_{A,A^c}(Z_{\upsilon})\mathbbm{1}_F] \geq \mathbb{E}[h_{A,A^c}(Z_{\tau}) \mathbbm{1}_F]$, and thus $\mathbb{E}[h_{A,A^c}(Z_{\upsilon})| \mathcal{F}_{\upsilon}] \geq h_{A,A^c}(Z_{\tau}).$ By a same argument for $x >0$, we obtain the result.
\end{proof}

\subsection{Construction}

For each $\gamma \in \mathcal{S}^{n-1}$, take $s_{\gamma}(r)=\frac{mr}{m_{\gamma}}$ to be the scale function on rays. Define a family of functions on $[0, +\infty)$,
$$c_{\gamma}(r)=\frac{\int_0^{+\infty} \abs{s_{\gamma}(u)-r}  \ \tilde{\mu}_{\gamma}(du)  +r+m    }{2}.$$
Here $c_{\gamma}$ is our potential function on rays. It has the following properties (see e.g. \cite{MR0501374} for proofs). 

\begin{lemma}\label{lem20}
$c_{\gamma}$ is a positive convex function such that
\begin{enumerate}[(i)]
\item $c_{\gamma}(0)=m$ and $c_{\gamma}(r) \geq r$; 
\item $\partial_+c_{\gamma}(r)=\tilde{\mu}_{\gamma}([0,\frac{m_{\gamma}r}{m}])$, \ $\partial_-c_{\gamma}(r)=\tilde{\mu}_{\gamma}([0,\frac{m_{\gamma}r}{m}))$;
\item $\lim\limits_{r \to +\infty} c_{\gamma}(r)-r=0$.
\end{enumerate}
\end{lemma}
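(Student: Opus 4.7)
My plan is to verify the three properties by direct computation from the definition of $c_\gamma$, exploiting the standing assumption that $\mu$ has finite first moment. This gives $m = \int|z|\,\mu(dz) < \infty$ and hence $m_\gamma = \int u\,\tilde\mu_\gamma(du) < \infty$ for $\tilde\mu_1$-a.e.\ $\gamma$, which is all the integrability one needs, and also gives $\int s_\gamma(u)\,\tilde\mu_\gamma(du) = (m/m_\gamma)\cdot m_\gamma = m$. Convexity of $c_\gamma$ is immediate: each integrand $r \mapsto |s_\gamma(u) - r|$ is convex, convexity survives integration against $\tilde\mu_\gamma$, and adding the affine term $(r+m)/2$ preserves it. Positivity of $c_\gamma$ then follows from $c_\gamma(r) \geq r \geq 0$, which is part of claim (i).

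For (i), I substitute $r = 0$ into the definition. Since $s_\gamma(u) = mu/m_\gamma \geq 0$, the absolute value drops out and the identity above yields $c_\gamma(0) = (m + 0 + m)/2 = m$. For the bound $c_\gamma(r) \geq r$, I use the one-sided triangle inequality $|s_\gamma(u) - r| \geq r - s_\gamma(u)$ and integrate against $\tilde\mu_\gamma$ to obtain $\int|s_\gamma(u) - r|\,\tilde\mu_\gamma(du) \geq r - m$; plugging back into the definition gives the claim.

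For (ii), I differentiate under the integral. For each fixed $u$, $r \mapsto |s_\gamma(u) - r|$ has right derivative $\mathbbm{1}_{\{s_\gamma(u) \leq r\}} - \mathbbm{1}_{\{s_\gamma(u) > r\}}$ and left derivative $\mathbbm{1}_{\{s_\gamma(u) < r\}} - \mathbbm{1}_{\{s_\gamma(u) \geq r\}}$; the uniform Lipschitz bound $\bigl||s_\gamma(u) - r_1| - |s_\gamma(u) - r_2|\bigr| \leq |r_1 - r_2|$ justifies the interchange via dominated convergence. Using the equivalence $s_\gamma(u) \leq r \Longleftrightarrow u \leq m_\gamma r/m$ and combining with the contribution $\tfrac{1}{2}$ from the affine term $r/2$, the stated formulas for $\partial_{\pm} c_\gamma(r)$ drop out after simplification.

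For (iii), I split the integrand according to the sign of $s_\gamma(u) - r$ and write
\[
\int|s_\gamma(u) - r|\,\tilde\mu_\gamma(du) \;=\; (r - m) \;+\; 2\int_{\{s_\gamma(u) > r\}} \bigl(s_\gamma(u) - r\bigr)\,\tilde\mu_\gamma(du).
\]
The second term tends to $0$ as $r \to \infty$ by dominated convergence, with $s_\gamma \in L^1(\tilde\mu_\gamma)$ serving as an integrable dominator. Substituting back into the definition gives $c_\gamma(r) - r = o(1)$. None of these steps is deep; the only real obstacle is bookkeeping around integrability, which is handled once and for all by the finite first moment assumption, with the lemma understood to hold for each $\gamma$ in the $\tilde\mu_1$-full set $\{m_\gamma < \infty\}$.
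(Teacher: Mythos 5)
Your verification is correct and complete. The paper itself does not supply a proof of this lemma; it simply refers the reader to the literature on one-dimensional potential functions (the Chacon--Walsh / potential-theoretic framework), where these facts about $r \mapsto \tfrac{1}{2}\bigl(\int|u-r|\,\nu(du) + r + m\bigr)$ are standard. Your argument is a self-contained elementary version of exactly that: the key identity $\int s_\gamma(u)\,\tilde\mu_\gamma(du) = m$ (which follows from the centering relation defining $m_\gamma$ and $s_\gamma$) makes (i) a one-line substitution, the one-sided bound $|s_\gamma(u)-r| \geq r - s_\gamma(u)$ gives $c_\gamma \geq r$, differentiation under the integral with the Lipschitz-in-$r$ dominator gives (ii), and the decomposition $\int|s_\gamma(u)-r|\,\tilde\mu_\gamma(du) = (r-m) + 2\int_{\{s_\gamma(u)>r\}}(s_\gamma(u)-r)\,\tilde\mu_\gamma(du)$ together with dominated convergence gives (iii). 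The one caveat worth making explicit (you touch on it at the end) is that the whole of Section~5 operates under the standing assumption that $(\mu,\kappa)$ is centered, so $m_\gamma \in (0,\infty)$ for $\tilde\mu_1$-a.e.\ $\gamma$, which is needed both for $s_\gamma$ to be well defined and for positivity of $c_\gamma$ via $c_\gamma(0) = m > 0$. No gaps.
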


Take $\zeta_{\gamma}(s)$ to be the $r$-coordinate of the point on $c_{\gamma}$ where the tangent line passes through $(0,s)$. Since such a point may not be unique, we choose the one with maximum $r$-coordinate, i.e., 
\begin{equation*}%\label{sss}
\zeta_{\gamma}(s):=\sup \argmin_{r >0} \big\{ \frac{c_{\gamma}(r)-s  }{r} \big\}.  %\tag{$\star$ $\star$ $\star$}
\end{equation*}
We also take 
\begin{equation*}
\begin{aligned}
& \phi_{\gamma}(s) = \frac{c_{\gamma}(\zeta_{\gamma}(s))-s}{\zeta_{\gamma}(s)} , 
\ \ \Lambda(s)= \int_{\gamma \in \mathcal{S}^{n-1}} \phi_{\gamma}(s) \ \tilde{\mu}_1( d \gamma), \\
& H(s)=\int_0^s \frac{1}{\Lambda(u)} \ du, 
\ \ \ \ a_{\gamma}(l) =\frac{m_{\gamma}}{m}\zeta_{\gamma} (H^{-1}(l)). \\
\end{aligned}
\end{equation*}

We are now ready to define the stopping time, 
\begin{equation}\label{eq6}
\tau:=\inf \{ t \geq 0: R_t \geq a_{\Gamma_t}(L_t^Z)\},
\end{equation}
that is we stop the process if its excursion travels beyond the hypersurface $\gamma \mapsto a_{\gamma}(L_t^Z)$. 

Since $(a_{\gamma})_{\gamma \in \mathcal{S}^{n-1}}$ are non-increasing, stopping time $\tau$ is of barrier-type: taking $$B:=\underset{r \geq a_{\gamma}(l)}  \cup \{  [l,+\infty) \times \gamma r\} \subset [0,+\infty) \times \mathbb{R}^n,$$ we have $\tau=\inf\{t \geq 0: (L_t^Z,Z_t) \in B\}$. Before verifying that $\tau$ indeed embeds  measure $\mu$, we need a technical lemma. 

\begin{lemma}\label{lemma3}
$\phi_{\gamma}$ is absolutely continuous on closed subsets $[0,m)$ for each $\gamma \in \mathcal{S}^{n-1}$ and $$\phi_{\gamma}(s)=1-\int_0^s \frac{1}{\zeta_{\gamma}(u)} \ du .$$
\end{lemma}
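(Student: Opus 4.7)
The plan is to exploit the infimum characterization of $\phi_{\gamma}$ to derive a sharp two-sided estimate on its increments, then bootstrap this estimate into both the absolute continuity statement and the integral representation via Riemann sums. For $0 \leq s_1 < s_2 < m$ set $\zeta_i = \zeta_{\gamma}(s_i)$, so that by definition $\phi_{\gamma}(s_i) = \frac{c_{\gamma}(\zeta_i) - s_i}{\zeta_i}$. Plugging $r = \zeta_2$ into the infimum $\phi_{\gamma}(s_1) = \inf_{r > 0} \frac{c_{\gamma}(r) - s_1}{r}$ gives $\phi_{\gamma}(s_1) \leq \phi_{\gamma}(s_2) + \frac{s_2 - s_1}{\zeta_2}$, and the symmetric substitution $r = \zeta_1$ in the infimum defining $\phi_{\gamma}(s_2)$ yields $\phi_{\gamma}(s_2) \leq \phi_{\gamma}(s_1) - \frac{s_2 - s_1}{\zeta_1}$. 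Together they produce the sandwich
\begin{equation*}
\frac{s_2 - s_1}{\zeta_1} \;\leq\; \phi_{\gamma}(s_1) - \phi_{\gamma}(s_2) \;\leq\; \frac{s_2 - s_1}{\zeta_2}.
\end{equation*}
The upper bound shows that $\phi_{\gamma}$ is non-increasing, and consistency of the two bounds forces $\zeta_{\gamma}(s_2) \leq \zeta_{\gamma}(s_1)$, so $\zeta_{\gamma}$ is non-increasing as well.

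Next I would show that $\zeta_{\gamma}$ is bounded below by a positive constant on every compact subinterval $[0, m - \epsilon]$. For $s > 0$, the representation $\frac{c_{\gamma}(r) - s}{r} = \frac{c_{\gamma}(r)}{r} - \frac{s}{r}$ together with $c_{\gamma}(r) - r \to 0$ from \lemref{lem20} implies that once $r$ is large enough to satisfy $c_{\gamma}(r) - r < s$, the ratio drops strictly below $1$; hence $\phi_{\gamma}(s) < 1$ and the infimum is attained at some finite $\zeta_{\gamma}(s) > 0$. Monotonicity then gives $\zeta_{\gamma}(s) \geq \zeta_{\gamma}(m-\epsilon) > 0$ for all $s \in [0, m-\epsilon]$, and the upper half of the sandwich becomes the Lipschitz bound
\begin{equation*}
|\phi_{\gamma}(s_1) - \phi_{\gamma}(s_2)| \;\leq\; \frac{|s_2 - s_1|}{\zeta_{\gamma}(m-\epsilon)}, \qquad s_1, s_2 \in [0, m-\epsilon],
\end{equation*}
which is more than enough for absolute continuity. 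To extract the integral formula, fix $0 < s_1 < s_2 < m$ and a partition $s_1 = t_0 < \dotsb < t_n = s_2$. Telescoping the sandwich across this partition produces
\begin{equation*}
\sum_{i=0}^{n-1} \frac{t_{i+1}-t_i}{\zeta_{\gamma}(t_i)} \;\leq\; \phi_{\gamma}(s_1) - \phi_{\gamma}(s_2) \;\leq\; \sum_{i=0}^{n-1} \frac{t_{i+1}-t_i}{\zeta_{\gamma}(t_{i+1})},
\end{equation*}
and since $1/\zeta_{\gamma}$ is monotone and bounded on $[s_1, s_2]$ these are its lower and upper Darboux sums; refining the mesh yields $\phi_{\gamma}(s_1) - \phi_{\gamma}(s_2) = \int_{s_1}^{s_2} \frac{du}{\zeta_{\gamma}(u)}$.

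Finally, \lemref{lem20} gives $c_{\gamma}(r) \geq r$ and $c_{\gamma}(r) - r \to 0$ as $r \to +\infty$, so $\phi_{\gamma}(0) = \inf_{r > 0} \frac{c_{\gamma}(r)}{r} = 1$. Letting $s_1 \downarrow 0$ in the identity just derived and invoking the continuity of $\phi_{\gamma}$ at $0$ supplied by the Lipschitz estimate extends the formula down to $s_1 = 0$, yielding $\phi_{\gamma}(s) = 1 - \int_0^s \frac{du}{\zeta_{\gamma}(u)}$. The main point to watch is the behaviour of $\zeta_{\gamma}$ near $s = 0$, where it may diverge to $+\infty$; happily, integrability of $1/\zeta_{\gamma}$ at $0$ is automatic from the identity, since $\int_0^s \frac{du}{\zeta_{\gamma}(u)} = 1 - \phi_{\gamma}(s) \leq 1$, so no delicate tail estimate on $\zeta_{\gamma}$ is required beyond its positivity on $(0, m)$.
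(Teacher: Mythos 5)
Your proof is correct, and it takes a genuinely different — and arguably cleaner — route than the paper. The paper establishes only the one-sided Lipschitz bound $\phi_\gamma(s-\delta)-\phi_\gamma(s)\le \delta/\zeta_\gamma(s)$, invokes it to get absolute continuity, and then computes the left derivative $\partial_-\phi_\gamma(s)=-1/\zeta_\gamma(s)$ directly, with a case analysis on whether $\tilde\mu_\gamma$ has an atom at $\zeta_\gamma(s)$ to dispose of the extra term coming from the product rule. You instead extract the two-sided sandwich
\begin{equation*}
\frac{s_2-s_1}{\zeta_\gamma(s_1)}\le\phi_\gamma(s_1)-\phi_\gamma(s_2)\le\frac{s_2-s_1}{\zeta_\gamma(s_2)}
\end{equation*}
by testing each infimum against the other's minimizer, telescope it over a partition, and recognize the two sides as left and right Riemann sums of the monotone bounded function $1/\zeta_\gamma$. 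This bypasses the differentiation and the case analysis entirely; it also gives monotonicity of $\zeta_\gamma$ as a free byproduct, which the paper uses later but states without proof. Your route is in this respect slightly tighter than the paper's, since the paper's argument (``differentiable a.e.'' plus ``left derivative $=-1/\zeta_\gamma$ at every $s$'') still needs a word on why the a.e.\ derivative that appears in the Lebesgue fundamental theorem agrees with $-1/\zeta_\gamma$ almost everywhere, a step the Riemann-sum argument delivers automatically. One small thing worth tightening: the claim that the infimum defining $\phi_\gamma(s)$ is attained at a finite $r$ is not quite implied just by ``the ratio drops strictly below $1$'' — one should also note that $r\mapsto(c_\gamma(r)-s)/r$ tends to $1$ at $+\infty$ and is quasi-convex (its derivative changes sign once, since $r\,c_\gamma'(r)-c_\gamma(r)$ is non-decreasing by convexity), so the infimum is a minimum; in any case this is implicit in the paper's very definition of $\zeta_\gamma$ as $\sup\argmin$, so you may lean on that.
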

\begin{proof}
The proof is from \cite[Lemma 2]{MR2307397}, and we record here for the sake of completeness. The function $ \phi_{\gamma}$ is the gradient of the tangent to $c_{\gamma}$ that passes through $(0,s)$. By the convexity of $c_{\gamma}$, we easily see that $\phi_{\gamma}$ is non-increasing on $[0, m)$. In addition, note that $c_{\gamma}$ is non-decreasing and $\zeta_{\gamma}$ is non-increasing. We estimate $\phi_{\gamma}(s-\delta)-\phi_{\gamma}(s)$ for small positive $\delta$, 
\begin{equation*}
\begin{aligned}
  \phi_{\gamma}(s-\delta)=&  \frac{c_{\gamma}(\zeta_{\gamma}(s-\delta))-(s-\delta)}{\zeta_{\gamma}(s-\delta)} \\
\leq &     \frac{c_{\gamma}(\zeta_{\gamma}(s))-s+\delta    }{\zeta_{\gamma}(s)}              =\phi_{\gamma}(s)+\frac{ \delta}{\zeta_{\gamma}(s)}.\\
\end{aligned}
\end{equation*}
Therefore $\phi_{\gamma}$ is $\frac{1}{\zeta_{\gamma}(s)}$-Lipschitz on closed intervals $[0,s] \subset [0,m)$ for any $s <m$. As a result, $\phi_{\gamma}$ is differentiable almost everywhere on $[0,m)$ and $\phi_{\gamma}(s)=\int_0^s {\phi}^{'}_{\gamma}(u) \ du +1$. 
Since $\zeta_{\gamma}$ is left-continuous, we can calculate the left derivative of $\phi_{\gamma}$, 

\begin{equation*}
\begin{aligned}
  \partial_-\phi_{\gamma}(s)  =& \partial_- \frac{c_{\gamma}(\zeta_{\gamma}(s))-s}{\zeta_{\gamma}(s)}\\
=& \frac{\partial_+c_{\gamma} (\zeta_{\gamma}(s)) \partial_- \zeta_{\gamma}(s)-1}{\zeta_{\gamma}(s)} -\frac{\partial_-\zeta_{\gamma}(s)(c_{\gamma}(\zeta_{\gamma}(s))-s) }{\zeta_{\gamma}^2(s)}\\
=& -\frac{1}{\zeta_{\gamma}(s)}+\frac{\partial_-\zeta_{\gamma}(s)}{\zeta_{\gamma}(s)}\bigg[\partial_+c_{\gamma}(\zeta_{\gamma}(s))-\frac{c_{\gamma}(\zeta_{\gamma}(s))-s}{\zeta_{\gamma}(s)}\bigg]. \\
\end{aligned}
\end{equation*}
If $\tilde{\mu}_{\gamma}$ has no atom at $\zeta_{\gamma}(s)$, $c_{\gamma}$ is then differentiable at $\zeta_{\gamma}(s)$ and $\partial_+c_{\gamma}(\zeta_{\gamma}(s))$ is just the gradient of the tangent $\frac{c_{\gamma}(\zeta_{\gamma}(s))-s}{\zeta_{\gamma}(s)}$. If $\tilde{\mu}_{\gamma}$ has an atom at $\zeta_{\gamma}(s)$, we know $\partial_-\zeta_{\gamma}(s)$ is zero. In both of these two cases, the second term of the above equation vanishes and we obtain the result. 
\end{proof}

\begin{thm}
The stopped process $(Z_{t \wedge \tau})_{t \geq 0}$ is uniformly integrable and $Z_{\tau}$ is of distribution $\mu$, where $\tau$ is defined in \eqref{eq6}.
\end{thm}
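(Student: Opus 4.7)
The strategy is to use It\^o's excursion theory together with the potential-theoretic properties of $c_\gamma$. Since $L^Z$ is constant on each excursion interval of $Z$ and $a_\gamma$ is non-increasing, the definition of $\tau$ identifies it with (the hitting of the barrier by) the first ``bad'' point of the marked excursion PPP with intensity $dl \otimes \kappa(d\gamma) \otimes n(de)$, namely the first $(l,\gamma,e)$ with $\sup e \geq a_\gamma(l)$. \lemref{lem-2} supplies $n(\{\sup e \geq x\}) = 1/x$, so the instantaneous ``bad'' rate at local time $l$ is $r(l) := \int_{\mathcal{S}^{n-1}} \kappa(d\gamma)/a_\gamma(l)$, and everything about the law of $Z_\tau$ can be read off the Poisson structure.

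First I would derive $\mathbb{P}[L^Z_\tau > l] = \exp\bigl(-\int_0^l r(u)\,du\bigr)$. Substituting $\kappa(d\gamma) = (m_\gamma/m)\tilde{\mu}_1(d\gamma)$ and $a_\gamma(u) = (m_\gamma/m)\zeta_\gamma(H^{-1}(u))$ cancels the factors of $m_\gamma/m$ and leaves $r(u) = \int \tilde{\mu}_1(d\gamma)/\zeta_\gamma(H^{-1}(u))$. \lemref{lemma3} gives $1/\zeta_\gamma = -\phi_\gamma'$, so the inner integral equals $-\Lambda'(H^{-1}(u))$. After the change of variables $v = H^{-1}(u)$ (so $du = dv/\Lambda(v)$) and using $\Lambda(0) = 1$, I get $\int_0^l r(u)\,du = -\log \Lambda(H^{-1}(l))$, hence $\mathbb{P}[L^Z_\tau > l] = \Lambda(H^{-1}(l))$. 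Combining this with the identity $R_\tau = a_{\Gamma_\tau}(L^Z_\tau)$ and the same change of variables yields
\begin{equation*}
\mathbb{P}[R_\tau > r,\ \Gamma_\tau \in d\gamma] = \tilde{\mu}_1(d\gamma)\int_0^s \frac{dv}{\zeta_\gamma(v)} = \tilde{\mu}_1(d\gamma)\bigl(1-\phi_\gamma(s)\bigr),
\end{equation*}
where $s$ is determined by $(m_\gamma/m)\zeta_\gamma(s) = r$. Since $\zeta_\gamma(s)$ is the rightmost touching point of the tangent from $(0,s)$ to the graph of $c_\gamma$, its slope $\phi_\gamma(s)$ coincides with $\partial_+c_\gamma(\zeta_\gamma(s))$, which \lemref{lem20}(ii) identifies with $\tilde{\mu}_\gamma([0,r])$. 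Therefore $\mathbb{P}[R_\tau > r,\,\Gamma_\tau \in d\gamma] = \tilde{\mu}_1(d\gamma)\tilde{\mu}_\gamma((r,+\infty))$, proving $Z_\tau \sim \mu$.

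For uniform integrability I invoke \lemref{lemmas} with a Borel set $A \subset \mathcal{S}^{n-1}$ satisfying $0 < \kappa(A) < 1$, and verify $x\,\mathbb{P}[\tau > H_x] \to 0$ as $x \to \pm\infty$. For $x > 0$, writing $y := x/\kappa(A)$, the event $\{H_x < \tau\}$ forces the first excursion on $A^c$ of height $\geq y$ to occur at a local time $l < a_\gamma^{-1}(y)$: any excursion on $A^c$ at $l \geq a_\gamma^{-1}(y)$ whose height is $\geq y$ automatically has height $\geq a_\gamma(l)$, is therefore a stopping excursion, and forces $\tau \leq H_x$. By Markov applied to the PPP count of such ``dangerous'' excursions,
\begin{equation*}
\mathbb{P}[\tau > H_x] \leq \int_{A^c} \frac{a_\gamma^{-1}(y)}{y}\,\kappa(d\gamma),
\end{equation*}
so $x\,\mathbb{P}[\tau > H_x] \leq \kappa(A) \int_{A^c} a_\gamma^{-1}(y)\,\kappa(d\gamma) \to 0$ by dominated convergence, using that $\zeta_\gamma(0^+) = +\infty$ implies $a_\gamma^{-1}(y) \downarrow 0$ as $y \to +\infty$. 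The case $x \to -\infty$ is symmetric after swapping $A$ and $A^c$. The main obstacle I anticipate is the potential-theoretic identification $\phi_\gamma(s) = \partial_+ c_\gamma(\zeta_\gamma(s)) = \tilde{\mu}_\gamma([0,r])$, where the convention $\zeta_\gamma = \sup \argmin$ must be handled with care on the flat pieces of $c_\gamma$ produced by atoms of $\tilde{\mu}_\gamma$.
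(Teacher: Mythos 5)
Your proof takes the same route as the paper: identify the event $\{\tau > \cdot\}$ with the excursion PPP structure, compute $\mathbb{P}[L^Z_\tau \geq H(s)] = \Lambda(s)$ via Lemma~\ref{lem-2}, change variables $u = H^{-1}(l)$, use $\phi'_\gamma = -1/\zeta_\gamma$ to read off the distribution of $Z_\tau$, and then verify \lemref{lemmas} for uniform integrability. Two points deserve comment.

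First, a small point you already flag: with the convention $\zeta_\gamma(s) = \sup\argmin$, the slope of the tangent at the endpoint of the integration interval is the left derivative, $\phi_\gamma(s) = \partial_- c_\gamma(\zeta_\gamma(s)) = \tilde{\mu}_\gamma\bigl([0,\tfrac{m_\gamma \zeta_\gamma(s)}{m})\bigr)$, not $\partial_+$. Since $\phi_\gamma$ is non-increasing and $\zeta_\gamma$ is constant (equal to $\zeta_\gamma(s)$) on a left neighborhood of $s$ when there is a kink, the supremum picks out the smallest admissible slope. Your write-up pairs the closed bracket $\tilde{\mu}_\gamma([0,r])$ with the open tail $\tilde{\mu}_\gamma((r,+\infty))$, which is self-consistent for identifying the law, but the actual identity at the endpoint is the half-open one; it does not change the conclusion.

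Second, a genuine gap in the uniform integrability step. You bound $\mathbb{P}[\tau > H_x]$ via Markov's inequality by $\int_{A^c} a_\gamma^{-1}(y)/y \, \kappa(d\gamma)$ and then want $\kappa(A)\int_{A^c} a_\gamma^{-1}(y)\,\kappa(d\gamma) \to 0$ by dominated convergence. But $a_\gamma^{-1}(y) = H\bigl(\sup\{u : \zeta_\gamma(u) \geq \tfrac{my}{m_\gamma}\}\bigr)$, and $H(m) = \int_0^m du/\Lambda(u)$ can be $+\infty$ whenever $\mu(\{\mathbf{0}\}) = 0$ (so $\Lambda(s) \to 0$ as $s \to m$). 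Then there is no obvious integrable dominating function and the convergence claim is unjustified. The paper avoids this by keeping the exact factor $\mathbb{P}[L^Z_\tau \geq l]$ inside the integral; under the change of variable $l = H(u)$ it contributes $\Lambda(u)$, which cancels the Jacobian $du/\Lambda(u)$ exactly, giving
\[
x\,\mathbb{P}[\tau > H_x] = \int_{\gamma \in A^c} \operatorname{Leb}\Bigl(\Bigl\{u : \zeta_\gamma(u) \geq \tfrac{mx}{m_\gamma}\Bigr\}\Bigr)\,\kappa(d\gamma),
\]
which is bounded by $m$ uniformly and tends to $0$ by dominated convergence since $\zeta_\gamma(0^+) = +\infty$. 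By replacing $\mathbb{P}[L^Z_\tau \geq l]$ with the crude bound $1$, you throw away precisely the cancellation that makes the dominated convergence argument work; you should either keep the exact expression as the paper does, or supply a separate argument that $\int_{A^c} a_\gamma^{-1}(y_0)\,\kappa(d\gamma) < \infty$ for some $y_0$.
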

\begin{proof}
Our proof relies on the excursion theory (see e.g. \cite{MR1725357}, \cite{MR1780932}). It is noted that $L^Z_{\tau}$ is no less than $H(s)$ is equivalent to excursions at local time $l$ has maximum modulus less than $a_{\gamma}(l)$ for any $l \leq H(s)$, where $\gamma$ is the direction of excursions. Take a subset $\mathcal{V}$ of $\Pi:=[0,+\infty) \times \mathcal{S}^{n-1} \times \mathcal{U}_R$, $$\mathcal{V}:=\{(l, \gamma, e): \ l < H(s),\ \gamma \in \mathcal{S}^{n-1},  \ \sup\limits_{t \geq 0} e(t) \geq a_{\gamma}(l) \}.$$ 
According to \lemref{lem-2},   the random variable $N^{\mathcal{V}}=\sum\limits_{l >0} \mathbbm{1}_{\mathcal{V}}(l, e_l)$ is Poisson  with parameter $$\int_0^{H(s)} dl \ \int_{\gamma \in \mathcal{S}^{n-1}} \frac{\kappa(d \gamma)}{a_{\gamma}(l)}.$$
Since $L_{\tau}^Z \geq H(s)$ if and only if $N^{\mathcal{V}}=0$,  we obtain 
$$  \mathbb{P}[L^Z_{\tau} \geq H(s)]=exp \left\{ -\int_0^{H(s)} dl \ \int_{\gamma \in \mathcal{S}^{n-1}} \frac{\kappa(d \gamma)}{a_{\gamma}(l)} \right\}. $$
By \lemref{lemma3}, we have $$-\int_{\gamma \in \mathcal{S}^{n-1}} \frac{m \kappa( d \gamma)}{m_{\gamma}\zeta_{\gamma}(u)}
=-\int_{\gamma \in \mathcal{S}^{n-1}} \frac{\tilde{\mu}_1( d \gamma)}{ \zeta_{\gamma}(u)}
=\int_{\gamma \in \mathcal{S}^{n-1}} \phi_{\gamma}^{'}(u) \ \tilde{\mu}_1( d \gamma)=\Lambda^{'}(u).$$
In conjunction with $H^{'}(u)=\frac{1}{\Lambda(u)}$, we get
\begin{equation*}
\begin{aligned}
  \mathbb{P}[L^Z_{\tau} \geq H(s)]= & exp \left\{ -\int_0^{H(s)} dl \ \int_{\gamma \in \mathcal{S}^{n-1}} \frac{\kappa(d \gamma)}{a_{\gamma}(l)} \right\} \\
 =& exp \left\{-\int_0^s H^{'}(u) \ du \ \int_{\gamma \in \mathcal{S}^{n-1}} \frac{m \kappa(d \gamma)   }{m_{\gamma}\zeta_{\gamma}(u)}    \right\} \\
= & exp \left\{-\int_0^s \frac{\Lambda^{'}(u)}{\Lambda(u)} du                           \right\}=\Lambda(s). \\
\end{aligned}
\end{equation*}

Recall the definition of $\tau$: we will stop in the region $d \gamma \times [r,+\infty)$ at local time $l$ if and only if $(Z_t)_{t \geq 0}$ does dot hit stopping region $B$ until an excursion travels beyond $a_{\gamma}(l) \geq r$ at local time $l$. Take a subset of $\Pi$, $$\mathcal{V}^{'}:=\{(u,\gamma, e_u): \ u \in (l, l+dl], \ \sup\limits_{t \geq 0} e(t) \geq a_{\gamma}(l) \} .$$
Hence by \lemref{lem-2},  we obtain $$\mathbb{P}[N^{\mathcal{V^{'}}} \geq 1]=\frac{\kappa(d \gamma)}{a_{\gamma}(l)} dl.$$
Since $Z_{\tau} \in d\gamma \times [r,+\infty), L_{\tau}^Z \in dl$ if and only if $L_{\tau}^Z \geq l, N^{\mathcal{V}^{'}} \geq 1, a_{\gamma}(l) \geq r$, 
 we conclude $$\mathbb{P}[Z_{\tau} \in d\gamma \times [r,+\infty), L_{\tau}^Z \in dl ]=\mathbbm{1}_{\{ a_{\gamma}(l) \geq r\}}  \mathbb{P}[L_{\tau}^Z \geq l] \frac{\kappa(d \gamma)}{a_{\gamma}(l)} dl,   $$
and 
\begin{equation*}
\begin{aligned}
  \mathbb{P}[Z_{\tau} \in d\gamma \times [r, +\infty)] &= \int_{\{l: a_{\gamma}(l) \geq r \} } \mathbb{P}[L_{\tau}^Z \geq l] \frac{\kappa(d \gamma)}{a_{\gamma}(l)} \ dl \\
 &= \int_{\{u:  \zeta_{\gamma}(u) \geq \frac{mr}{m_{\gamma}}  \} } H^{'}(u) \Lambda(u) \frac{m \kappa(d \gamma)}{m_{\gamma}\zeta_{\gamma}(u)} \ du \\
 &= - \tilde{\mu}_1(d \gamma) \int_{\{u:  \zeta_{\gamma}(u) \geq \frac{mr}{m_{\gamma}}  \} } \phi^{'}_{\gamma}(u) \ du. \\
\end{aligned}
\end{equation*}

Since 
$\phi_{\gamma}(s)=\partial_-c_{\gamma}(\zeta_{\gamma}(s))=\tilde{\mu}_{\gamma}([0,\frac{m_{\gamma}\zeta_{\gamma}(s)}{m})),$
we obtain
$$\int_{\{u:  \zeta_{\gamma}(u) \geq \frac{mr}{m_{\gamma}}  \} } \phi^{'}_{\gamma}(u) \ du=\tilde{\mu}_{\gamma}([0,r))-1,$$
and therefore
$$\mathbb{P}[Z_{\tau} \in d \gamma \times [r, +\infty)]=\tilde{\mu}_1(d \gamma) \times \tilde{\mu}_{\gamma}([r, +\infty)).$$

To finish the argument, we show that $(Z_{t \wedge \tau})_{t \geq 0}$ is uniform integrable by verifying \lemref{lemmas}. Recall our notation from \eqref{ss}, and consider the case $x>0$. Due to the construction of $\tau$, we have 
\begin{equation*}
\begin{aligned}
\mathbb{P}[\tau> H_x] & =\int_{\gamma \in A^c} \kappa(d \gamma) \int_{\{l:a_{\gamma}(l) \geq x \}} \frac{\mathbb{P}[L^Z_{\tau} \geq l]}{x} dl \\
& =\int_{\gamma \in A^c} \kappa(d \gamma)  \int_{\{u:\zeta_{\gamma}(u) \geq \frac{mx}{m_{\gamma}} \}} \frac{H^{'}(u) \Lambda(u)}{x} \ du \\
\end{aligned}
\end{equation*}
Therefore we have $$x\mathbb{P}[\tau> H_x]=\int_{\gamma \in A^c} \text{Leb}(\{u: \zeta_{\gamma}(u) \geq \frac{mx}{m_{\gamma}}\}) \ \kappa(d \gamma).$$ Function $\gamma \mapsto \text{Leb}(\{u: \zeta_{\gamma}(u) \geq \frac{mx}{m_{\gamma}}\})$ is bounded above by $m$, and decreases to $0$ as $x \to + \infty$, so by the dominated convergence theorem, we have $\lim_{x \to +\infty} x \mathbb{P}[\tau> H_x]=0.$
\end{proof}

\begin{remark}
Our construction above is a solution of Barrier type. In the simplest  case when $(Z_t)_{t \geq 0}$ is a skew Brownian motion, we can use an alternative construction that relies on  scaling and time changing. According to our choice of $\kappa$,  $(Z_t)_{t \geq 0}$ must  be of parameter $$\kappa=\frac{\int_{\mathbb{R}_+} r \ \mu(dr)}{\int_{\mathbb{R}_+} r \ \mu(dr) - \int_{\mathbb{R}_-} r \ \mu(dr)}. $$
Denote the scale function as,
\[
s_{\kappa}(x)=
\begin{dcases}
2(1-\kappa)x        &   \text{ if }   x \geq 0, \\
2\kappa x             & \text{ if } x <0. \\
\end{dcases}
\]
Take $r_{\kappa}$ to be the inverse of $s_{\kappa}$. Let 
\[
\sigma_{\kappa}^2(x)=
\begin{dcases}
4(1-\kappa)^2  & \text{ if } x \geq 0, \\
4\kappa^2 & \text{ if } x<0.  \\
\end{dcases}
\]
Define the time change  $T_{\kappa}$ via
$$t=\int_0^{T_{\kappa}(t)} \frac{1}{\sigma_{\kappa}^2(W(u))} \ du.$$ 
Then $Z_t=r_{\kappa}(B_{T_{\kappa}(t)})$ is a skew Brownian motion of parameter $\kappa$ (see e.g. \cite{MR606993}, \cite{AST_1978__52-53__37_0}). We also have  $B_t=s_{\kappa}(Z_{A_{\kappa}(t)})$, where $A_{\kappa}(t):=\inf \{s : T_{\kappa}(s) >t \}$. 

It is well-known (see e.g. \cite[Theorem 9]{MR2307397}, \cite{MR1162722}) that there exists a pair of monotone function $(a,b)$ such that 
\begin{enumerate}[(i)]
\item $a: [0, +\infty) \to (-\infty, 0]$ is increasing, 
\item  $b:[0,+\infty) \to [0,+\infty)$ is decreasing, 
\item $\tau:=\inf \{ t : \ B_t \not \in (a(L^B_t), b(L^B_t)) \}$ embeds $\mu \circ s_{\kappa}^{-1}$, 
\end{enumerate}
Note that 
\begin{equation*}
\begin{aligned}
 L^Z_t  =& \lim_{\epsilon \to 0} \frac{1}{2 \epsilon} \int_0^t \mathbbm{1}_{(-\epsilon,+\epsilon)} (Z_s) \ ds \\
 = & \lim_{\epsilon \to 0} \frac{1}{2 \epsilon} \int_0^{T_{\kappa}(t)} \mathbbm{1}_{(-2\kappa \epsilon, 2 (1-\kappa) \epsilon)} B_s \ ds =L^B_{T_{\kappa}(t)}\\
\end{aligned}
\end{equation*}
Hence we have $$Z_{A_{\kappa}(\tau)}=r_{\kappa}(B_{\tau}) \sim \mu\circ s_{\kappa}^{-1} \circ r_{\kappa}^{-1}= \mu,$$ and also
\begin{equation*}
\begin{aligned}
 A_{\kappa}(\tau)=& \inf \{ A_{\kappa}(t): B_t \not \in  (a(L^B_t), b(L^B_t))\} \\
= & \inf \{t : B_{T_{\kappa}(t)} \not \in (a(L^B_{T_{\kappa}(t)}, b(L^B_{T_{\kappa}(s)}) \} \\
=& \inf \{t: Z_t \not \in (r_{\kappa} \circ a (L^Z_t), r_{\kappa} \circ b(L^Z_t)) \}, \\
\end{aligned}
\end{equation*}
which is of Barrier type.
\end{remark}

\subsection{Verification of Optimality.}
Beiglb\"{o}ck et al. have developed a new approach to the optimal skorokhod embedding problem based on the ideas of Optimal transport in \cite{MR3639595} and \cite{2019arXiv190303887B}, where the duality result and the monotonicity principle are presented. Most of their arguments are abstract and can carry over to the embedding problem for continuous Feller processes. By a similar argument as \cite[Theorem 6.14]{MR3639595}, we know that the optimizer of problem \eqref{eq5} must be of Barrier type. Since Barrier type solutions are in general essentially unique (see \cite{MR0292170}), our stopping time $\tau$ should solve the optimization problem \eqref{eq5}. 

Applying the method of pathwise inequalities established in \cite{MR3865340} and \cite{MR2462552}, we verify the optimality of $\tau$ by constructing the dual optimizer $(G, M)$. We define 
\begin{equation*}
\begin{aligned}
\Delta(l) & :=\int_0^l dm \int_{\gamma \in \mathcal{S}^{n-1}} \frac{1}{a_{\gamma}(m)} \ \kappa(d \gamma), \\
A_{\gamma}(l) &:=\Psi^{'}(+\infty)-\int_l^{+\infty} \frac{dm}{a_{\gamma}(m)} e^{\Delta(m)} \int_m^{+\infty} e^{-\Delta(n)} \Psi^{''}( dn).\\
% A(l)& := \int_{\gamma \in \mathcal{S}^{n-1}} A_{\gamma}(l) \ \kappa(d \gamma). \\
\end{aligned}
\end{equation*}
We now construct a function $G: \mathbb{R}^n \to \mathbb{R}$ and a local martingale $(M_t)_{t \geq 0}$ such that $M_t+ G(Z_t) \leq  \Psi(L^Z_t)$, and  equality is obtained when $Z_t=(\Gamma_t, a_{\Gamma_t}(L_t^Z))$. Define $G$ to be concave on each ray, 
\[
G(\gamma,r)=
\begin{dcases}
\Psi(0)- \int_0^{+\infty}  dm \ e^{\Delta(m)} \int_m^{+\infty} e^{-\Delta(n)} \Psi^{''}( dn)    & \text{ if } r=0,        \\
 \inf_{l >0} \left\{ r A_{\gamma}(l)+\Psi(0)- \int_0^{l}  dm \ e^{\Delta(m)} \int_m^{+\infty} e^{-\Delta(n)} \Psi^{''}( dn)  \right\}  & \text{ if } r >0. \\
\end{dcases}
\]
Denote by $b_{\gamma}$ the right-continuous inverses of $a_{\gamma}$. Since $a_{\gamma}(r)$ is non-increasing with respect to $r$, it is easily seen that the infimum above is obtained at $l=b_{\gamma}(r)$, and hence 
\begin{equation*}
\begin{aligned}
& G_{\gamma}^{'}(r+)   \leq A_{\gamma}(b_{\gamma}(r)) \leq G_{\gamma}^{'}(r-), \\
& G(\gamma,r) =rA_{\gamma}(b_{\gamma}(r))+\Psi(0)-\int_0^{b_{\gamma}(r)}  dm \ e^{\Delta(m)} \int_m^{+\infty} e^{-\Delta(n)} \Psi^{''}( dn). \\
\end{aligned}
\end{equation*}
Take $$M_t:=\int_0^{L_t^Z}dm \int_{ \gamma \in \mathcal{S}^{n-1}} A_{\gamma}(m) \ \kappa(d\gamma)-A_{\Gamma_t}(L_t^Z)R_t.$$ 
\begin{thm}
The random process $(M_t)_{t \geq 0}$ is a local martingale. We have  the pathwise inequality $$M_t+G(Z_t) \leq  \Psi(L_t^Z),$$ where equality is obtained for those paths such that $Z_t=(\Gamma_t, a_{\Gamma_t}(L_t^Z))$. 
\end{thm}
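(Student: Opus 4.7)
The plan is to verify the two assertions separately, both hinging on a careful application of the Walsh Brownian Itô formula (Lemma~\ref{lem10}) and on a deterministic identity relating the function $\bar A$ to $\Psi'$.

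For the local martingale claim, I would apply Lemma~\ref{lem10} with the fixed-$l$ test function $g_\gamma(r)=A_\gamma(l)\,r$, which lies in $\mathfrak D$ and has $g_\gamma(0)=0$, $\partial_+g_\gamma(0)=A_\gamma(l)$. This yields, for each fixed $l\ge 0$,
$$A_{\Gamma_t}(l)\,R_t \;=\; A_{\Gamma_0}(l)\,R_0 \;+\; \int_0^t A_{\Gamma_s}(l)\,\mathbbm{1}_{\{R_s\ne 0\}}\,dB^Z_s \;+\; \bar A(l)\,L^Z_t,$$
where $\bar A(l)=\int_\gamma A_\gamma(l)\,\kappa(d\gamma)$. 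The next step is to replace $l$ by $L^Z_t$: since $L^Z$ is continuous and increases only on $\{R=0\}$, while $r\mapsto A_\gamma(l)r$ vanishes at $r=0$, the extra term in the Itô formula from the variation of $l$ vanishes and we obtain
$$A_{\Gamma_t}(L^Z_t)\,R_t \;=\; A_{\Gamma_0}(L^Z_0)\,R_0 \;+\; \int_0^t A_{\Gamma_s}(L^Z_s)\,\mathbbm{1}_{\{R_s\ne 0\}}\,dB^Z_s \;+\; \int_0^t \bar A(L^Z_s)\,dL^Z_s.$$
Combining with $\int_0^{L^Z_t}\bar A(m)\,dm=\int_0^t\bar A(L^Z_s)\,dL^Z_s$ gives
$$M_t \;=\; M_0 \;-\; \int_0^t A_{\Gamma_s}(L^Z_s)\,\mathbbm{1}_{\{R_s\ne 0\}}\,dB^Z_s,$$
which is a local martingale as a stochastic integral against Brownian motion.

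For the pathwise inequality, I would use the definition of $G(\gamma,r)$ as an infimum over $l>0$: for every $l>0$,
$$G(\gamma,r)\;\le\;r\,A_\gamma(l)+\Psi(0)-\int_0^l e^{\Delta(m)}\!\!\int_m^{+\infty}\! e^{-\Delta(n)}\,\Psi''(dn)\,dm,$$
with equality when $l=b_\gamma(r)$. Taking $l=L^Z_t$, $\gamma=\Gamma_t$, $r=R_t$ and adding $M_t=\bar M(L^Z_t)-A_{\Gamma_t}(L^Z_t)R_t$, the terms $\pm R_t A_{\Gamma_t}(L^Z_t)$ cancel, leaving
$$M_t+G(Z_t)\;\le\;\bar M(L^Z_t)+\Psi(0)-\int_0^{L^Z_t} e^{\Delta(m)}h(m)\,dm, \qquad h(m):=\int_m^{+\infty}e^{-\Delta(n)}\Psi''(dn).$$
It then remains to verify the deterministic identity $\bar M(l)-\int_0^l e^{\Delta(m)}h(m)\,dm=\Psi(l)-\Psi(0)$. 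Differentiating, this reduces to $\bar A(l)-e^{\Delta(l)}h(l)=\Psi'(l)$, which I would obtain by computing $\bar A(l)=\Psi'(+\infty)-\int_l^{+\infty}\Delta'(m)e^{\Delta(m)}h(m)\,dm$ and integrating by parts using $(e^{\Delta}h)'(m)=\Delta'(m)e^{\Delta(m)}h(m)-\Psi''(dm)$ together with the boundary condition $e^{\Delta(m)}h(m)\to 0$ as $m\to+\infty$. Equality in the pathwise bound occurs precisely when the infimum in $G$ is attained at $l=L^Z_t$, that is, when $L^Z_t=b_{\Gamma_t}(R_t)$, equivalently $R_t=a_{\Gamma_t}(L^Z_t)$.

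The main obstacle is the justification of the substitution $l\mapsto L^Z_t$ in the fixed-$l$ Itô formula. A naive product-rule computation $d\bigl(A_\Gamma(L)R\bigr)=A_\Gamma(L)\,dR+R\,dA_\Gamma(L)$ would produce a term $A_{\Gamma_s}(L^Z_s)\,dL^Z_s$ whose value at $R_s=0$ depends on the conventional choice $\Gamma_s=(1,0,\ldots,0)$; the correct Walsh calculus furnished by Lemma~\ref{lem10} instead averages over $\kappa$ and yields $\bar A(L^Z_s)\,dL^Z_s$, which is exactly what is required to cancel the $\bar A(L^Z_s)\,dL^Z_s$ contribution coming from $d\bar M(L^Z_s)$. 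Making this cancellation rigorous, together with checking that the $\partial_l A_{\Gamma_s}(L^Z_s)R_s\,dL^Z_s$ correction vanishes on account of $R_s=0$ on the support of $dL^Z$, is the technical heart of the argument.
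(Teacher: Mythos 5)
Your proposal is correct and follows essentially the same route as the paper: apply Lemma~\ref{lem10} to the linear-in-$r$ test function $g_\gamma(r)=A_\gamma(l)\,r$, observe that the $l$-variation term $A'_{\Gamma_s}(L^Z_s)R_s\,dL^Z_s$ vanishes because $dL^Z$ is supported on $\{R=0\}$, note the cancellation between the $V_g^Z$ local-time contribution $\bar A(L^Z_s)\,dL^Z_s$ and the differential of $\int_0^{L^Z_t}\bar A(m)\,dm$, and then reduce the pathwise inequality to the identity $\bar A(l)=\Psi'(l)+e^{\Delta(l)}\int_l^{+\infty}e^{-\Delta(n)}\Psi''(dn)$ obtained by Fubini/integration by parts, with equality iff the infimum defining $G$ is attained at $l=b_{\Gamma_t}(R_t)$. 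The one place you add useful clarity over the paper's terse presentation is in flagging that Lemma~\ref{lem10} as stated is for fixed functions, so the substitution $l\mapsto L^Z_t$ requires the separate observation that the cross term dies on $\{R=0\}$; the paper performs this silently inside its single displayed $-dM_t$ computation.
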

\begin{proof}
By \lemref{lem10}, we have 
\begin{equation*}
\begin{aligned}
-dM_t & =A^{'}_{\Gamma_t}(L_t^Z)R_t  \ d L_t^Z + \mathbbm{1}_{\{R_t \not = 0\}}A_{\Gamma_t}(L_t^Z) \ dB^Z_t \\
& + \int_{\gamma \in \mathcal{S}^{n-1}} A_{\gamma}(L_t^Z) \ \kappa( d \gamma) \ d L_t^Z 
 - \int_{\gamma \in \mathcal{S}^{n-1}} A_{\gamma}(L_t^Z) \ \kappa( d \gamma) \ d L_t^Z \\
& =A^{'}_{\Gamma_t}(L_t^Z)R_t  \ d L_t^Z + \mathbbm{1}_{\{R_t \not = 0\}}A_{\Gamma_t}(L_t^Z) \ dB^Z_t. \\
\end{aligned}
\end{equation*}
Since $(L_t^Z)_{t \geq 0}$ is flat off $\{R_t=0\}$, the first term vanishes. Therefore $(M_t)_{t \geq 0}$ is a local martingale. 

Note that 
\begin{equation*}
\begin{aligned}
\int_{\gamma \in \mathcal{S}^{n-1}} A_{\gamma}(l) \ \kappa( d \gamma) &=\Psi^{'}(+\infty)-\int_l^{+\infty} \Delta^{'}(m) e^{\Delta(m)} \ dm \int_m^{+\infty}  e^{-\Delta(n)} \Psi^{''}( dn) \\
&=\Psi^{'}(+\infty)-\int_l^{+\infty}  e^{-\Delta(n)} \Psi^{''}( dn) \int_l^{n} \Delta^{'}(m) e^{\Delta(m)} \ dm \\
&=\Psi^{'}(+\infty) -\int_l^{+\infty}  e^{-\Delta(n)}   \Psi^{''}( dn) (e^{\Delta(n)}-e^{\Delta(l)}) \\
&=\Psi^{'}(l)+e^{\Delta(l)}\int_l^{+\infty}  e^{-\Delta(n)}   \Psi^{''}( dn). \\
\end{aligned}
\end{equation*}
Therefore by the definition of $G$, we obtain 
\begin{equation*}
\begin{aligned}
G(Z_t) & \leq A_{\Gamma_t}(L_t)R_t+\Psi(0)- \int_{0}^{L^Z_t}  dm \ e^{\Delta(m)} \int_m^{+\infty} e^{-\Delta(n)} \Psi^{''}( dn)  \\
&=- M_t+\Psi(L_t^Z),
\end{aligned}
\end{equation*}
where the inequality is strict unless $Z_t=(\Gamma_t, a_{\Gamma_t}(L_t^Z))$.
\end{proof}

Suppose $\tau^{'}$ is a stopping time such that $Z_{\tau^{'}} \sim \mu$ and $(Z_{t \wedge \tau^{'}})_{t \geq 0}$ is uniformly integrable. Then by a similar argument as \cite[Lemma 2.1]{MR2462552}, we know that the stopped process 
$(M_{t \wedge \tau^{'}})_{t \geq 0}$ is uniformly integrable, and thus $\mathbb{E}[M_{t \wedge \tau^{'}}]=0$.  So that we have $$\int_{z \in \mathbb{R}^n} G(z) \ \mu(dz) \leq \mathbb{E}[\Psi(L^Z_{\tau^{'}})],$$
where equality is obtained when $\tau^{'}=\tau$. Therefore the stopping time $\tau$ solves the optimization problem \eqref{eq5}.

\footnotesize{
\bibliographystyle{siam}
\bibliography{ref.bib}}
\end{document}